\documentclass[11pt]{article}
\usepackage{palatino}
\usepackage{aliascnt}
\usepackage{amsmath}
\usepackage{amsrefs}
\usepackage{amssymb}
\usepackage{amsthm}
\usepackage{algorithm}
\usepackage{algpseudocode}
\usepackage{tabularx}

\usepackage{bm}
\usepackage{bbm}

\usepackage{calrsfs}
\usepackage{color}

\usepackage[sans]{dsfont}

\usepackage{enumerate}
\usepackage{epsfig}
\usepackage{extarrows}
\usepackage[mathscr]{euscript}
\usepackage[symbol]{footmisc}
\usepackage{framed}
\usepackage{fullpage}
\usepackage[margin=1in]{geometry}
\usepackage{graphicx}
\usepackage{import}

\usepackage{multirow}

\usepackage{paralist}
\usepackage{psfrag}

\usepackage{subfig}
\usepackage[]{times}
\usepackage{tikz}

\usepackage{transparent}
\usepackage[normalem]{ulem}
\usepackage{url}
\usepackage{verbatim}
\usepackage{wrapfig}
\usepackage{xcolor}
\usepackage{xifthen}
\definecolor{Green}{rgb}{0.0,0.45,0.0}
\usepackage[colorlinks,linkcolor=blue,citecolor=red,bookmarksopen=false]{hyperref}

\def\NewTheorem#1#2{%
	\newaliascnt{#1}{thmm}
	\newtheorem{#1}[#1]{#2}
	\aliascntresetthe{#1}
	\expandafter\def\csname #1autorefname\endcsname{#2}
}

\numberwithin{equation}{section}

\NewTheorem{lem}{Lemma}
\NewTheorem{thm}{Theorem}
\NewTheorem{cor}{Corollary}

\NewTheorem{prop}{Proposition}

\theoremstyle{definition}
\NewTheorem{defin}{Definition}

\theoremstyle{remark}
\NewTheorem{remark}{Remark}
\theoremstyle{definition}
\NewTheorem{eg}{Example}

\title{A Note On Gaussian Random Fields \& Underlying Markov Processes
Through a Central Limit Theorem}

\author{Edward C.\ Waymire\thanks{Department of Mathematics,  Oregon State University, Corvallis, OR, 97331. 
{waymire@math.oregonstate.edu}}
}

\begin{document}
\maketitle

\begin{abstract} 
Various classes of  
 Gaussian random fields associated with transient Markov processes $Y$
 have been introduced in the  probability and mathematical physics literature.  The present paper is based on a natural
class of Gaussian random fields, termed universal Gaussian random fields
(UGRF),  for an underlying
Markov processes $X$, on a state space $(S,\mathcal{S})$ and having an
ergodic invariant
initial distribution $\pi$,
via a central limit theorem of Rabi Bhattacharya for appropriately 
scaled additive integral functionals
$\int_0^{nt}f(X(s))ds = \sum_{j=1}^n\int_{(j-1)t}^{jt}f(X(s))ds$ for $f\in1_\pi^\perp\equiv \{f\in L^2(S,\pi):\langle f,1\rangle_\pi=0\}$.  A Lamperti-type time change is 
introduced to obtain an infinite dimensional stationary Ornstein-Uhlenbeck evolution within a framework introduced in a classic
paper of K. It\^o. In particular it is shown that the It\^o's 
deterministic component vanishes under this time change, and It\^o's continuous regularity theory is applied. 
Connections with  
 GRFs associated with Markov processes $Y$ in a sense of Dynkin, and in a sense of
 Diaconis and Evans, respectively, are established 
under additional conditions on the infinitesimal generator $(A,\mathcal{D}(A))$
of the underlying Markov process $X$.
 \end{abstract}
\section{Introduction and Preliminaries}
Various classes of  
 Gaussian random fields (GRFs) associated with transient Markov processes $Y$
 have been introduced in the  probability and mathematical physics literature  by Dynkin \cite{Dynkin1, Dynkin2},
 Diaconis and Evans \cite{DiaconisEvans},
and Nelson \cite{Nelson2}, to mention a few. 
The basis for these theories, which is the main focus of this note, 
can generally be traced back to the early development of Euclidean field theory
and ideas and observations by Schwinger \cite{Schwinger},  Nakano \cite{Nakano} and  Symanzik \cite{Symanzik1}.
Namely, the centered GRF can be specified by using the Greens function
from quantum field theory to specify its covariance.  The operators in the
simplest cases, e.g., the massive free boson particle, can be more generally viewed
as infinitesimal generators of  killed symmetric Markov processes. Symmetry
and transience provide the requisite positive semidefinitenss 
and symmetry properties for the Greens function to serve as
a covariance function. 

 The 
Gaussian assumption is natural from the perspective that, apart
from centering, its entire distribution can be specified by the covariance.  Physicists viewed it as physically natural\footnote{The author thanks Leonard Gross for sharing this historical perspective.} 
for non-interacting (free) fields because the 
 Laplacian term of the simplest wave equation
could be viewed in Fourier space as a ``sum of infinitely many non-interacting quantum harmonic oscillators".
 Beyond this,
an insightful observation on 
 Wolpert's \cite{wolpert} Euclidean free field representation 
 is provided by Faris
 (\cite{Faris}, Sec 1.10), 
where it is noted that 
\lq\lq the central limit theorem (clt) causes 
a weighted sum of the local times of many particles 
to become a Gaussian field\rq\rq; see Proposition 1.9
of (\cite{Faris}, loc, cit.).  
Local times of 
Markov processes present 
 an alternative class of additive functionals to the \lq\lq integral\rq\rq functionals 
 for the Gaussian clt. 
  
 A theory for 
killed {\it recurrent} Markov processes $Y$ was also 
introduced by Diaconis and Evans in \cite{DiaconisEvans}, independently
motivated by developments in 
random matrix theory.  However, as 
remarked in (\cite{DiaconisEvans}, p. 869), the essential
difference between their approach and that of
Dynkin is that for a finite
state space and transient continuous time
Markov chain with infinitesimal generator
$A$, Dynkin would choose
the generalized inverse $-A^{-1}$ (described in next
paragraph below) to construct
the covariance,  while these authors use 
 $-A$. In other words, the former uses the 
Greens function associated with $-A$
and the latter uses the Dirichlet form 
associated with $-A$ whereas, in the 
ergodic case, Theorem \ref{dirformthm}
involves the Dirichlet form of $-A^{-1}$.
In this regard, the present note is based on a seemingly lesser known
class of GRFs suggested in \cites{RBfclt,DiaconisEvans}
and termed Universal Gaussian Random Fields
(UGRFs) here.  

The existence and uniqueness 
of  a generalized inverse
 $ A^{-1}$ as a well-defined linear
operator on its range ${\mathcal R}({ A})$ is 
straightforward in the present context.
Namely,
it follows from the
Hille-Yosida theorem for strongly continuous
semigroups that $ A$ is a densely defined
closed linear operator, so that one may express
 $L^2(S,\pi) =\mathcal{N}^\perp({ A})\oplus\mathcal{N}({ A})$,
 where $\mathcal{N}(A)$ denotes the null space of $A$.  
 The restriction of
$(A,\mathcal{D}(A))$ to
  a linear operator from 
$\mathcal{D}(A)\cap \mathcal{N}^\perp({ A})$
onto $\mathcal{R}(A)\subset\mathcal{N}({ A})^\perp$, 
 is a bijection, in fact, an isomorphism.  
 $ A^{-1}$ is the inverse of this bijection, and vanishes
 on $\mathcal{R}^\perp(A)$.
 
 \begin{remark}The terminology {\it underlying} ergodic Markov process $X$ is used in
in the context of UGRFs, while {\it associated} 
 Markov process $Y$ refers
to the contexts of \cite{Dynkin1} and \cite{DiaconisEvans},
respectively.
\end{remark}

UGRFs can be associated with ergodic 
Markov processes $X$ on a state space $(S,\mathcal{S})$ with invariant
initial distribution $\pi$
via a central limit theorem for appropriately 
scaled additive integral functionals
$\int_0^{nt}f(X(s))ds = \sum_{j=1}^n\int_{(j-1)t}^{jt}f(X(s))ds$  for $f\in1_\pi^\perp\equiv \{f\in L^2(S,\pi):\langle f,1\rangle_\pi=0\}$ due to 
\cite{RBfclt}; also see \cite{kipvar}, \cite{wayfclt}.  
Apart from the identification of these random fields in space-time limits,
the focus is on the development of connections and counterparts to the
aforementioned GRF  theories. In particular, it is shown that
 the natural choice for a
subclass of self-adjoint 
infinitesimal generators of ergodic Markov processes associated
with the UGRF
is the Dirichlet form of $-A^{-1}$; see
Theorem \ref{dirformthm} and Example \ref{dirformeg}, below. 
Interestingly, the result involves a mix
of  both \cite{Dynkin1} and \cite{DiaconisEvans} theories in the
use of (generalized) $A^{-1}$ on one hand, and the Dirichlet form
on the other. 
Moreover, it is shown in Theorem \ref{dynkinugrf} that the GRF associated with a subclass of killed ergodic time-reversible Markov processes in the sense of Dynkin  is a bounded
linear transformation of the stationary UGRF associated with the ergodic Markov process via Theorem \ref{statugrfthm}. 
The conditions required for Theorem \ref{dynkinugrf} are a bit milder than those
for Theorem \ref{dirformthm}, although both require symmetries. 

 For technical reasons involving the integrability
of the Greens function, Dynkin chose to index his 
random fields by $\sigma$-finite measures.  However,
we shall continue to use closed
subspaces of  $L^2(S,\mathcal{S},\pi)$ 
 for test functions indexing the
random fields.  The continuous regularity 
Corollary \ref{contregcor} and Theorem \ref{dynkinugrf} of the present paper 
indicates an advantage of this framework.
The following is a 
definition of generalized Gaussian random fields that
is adopted here\footnote{Some of the notation used here
is taken from the simplified treatment of It\^o's theory developed by \cite{Farisito}.} 
\begin{defin}\label{spacebm}
\begin{itemize}
\item[i.] A  generalized spatial
Gaussian random field $G$ over a 
separable Hilbert space $H$, 
$\langle\cdot,\cdot\rangle_H$ is a collection
$\{G(f):f\in H\}$ of zero mean
Gaussian
random variables defined on a probability space
$(\Omega, {\mathcal F}, P)$, $f\to G(f), f\in  H$,
being linear and continuous
in probability, and  such that 
\begin{equation*}
\text{cov}(G(f_1),G(f_2)) = \langle f_1,Cf_2\rangle_H, \ f_1,f_2\in H,
\end{equation*}
where $C$ is a bounded, self-adjoint, non-negative operator
on $H$, referred to as the spatial covariance operator.  In the
case that $C=I$ is the identity operator, $G$ is referred to
as Gaussian white noise (or isonormal 
random field) on $H$, and $G$ is typically denoted as
 $W$ in this instance.  
 \item[ii.] A space-time generalized Brownian motion
$\{B(f,t):f\in H, t\ge 0\}$ is an evolution of mean 
zero generalized Gaussian random fields
on a separable
 Hilbert space $H$ with covariance operator $C$ 
 on  $(\Omega,\mathcal{F},P)$ such that
for any $m\ge 1$, $f_1,\dots,f_m\in H$, $t\to (B(f_1,t),\dots,B(f_m,t)), t\ge 0$,
is an $m$-dimensional  Brownian motion, for which
\begin{equation}\label{cylwhite}
{\mathbb E}B(f_1,s)B(f_2,t) = (s\wedge t)\langle f_1,Cf_2\rangle_H, \  s,t\ge 0, f_1,f_2\in H.
\end{equation}
\end{itemize}
 \end{defin}
 \begin{remark}
 As noted in \cite{Farisito}, one may also view the spatial
 covariance operator $C$ as a map $C:H\to H^*$ such
 that for $g\in H$, the bounded linear functional $Cg$
 has value at $f\in H$ given by $\langle f,Cg\rangle_H$;
 also denoted by the \lq\lq matrix\rq\rq pairing $fCg$
 in \cite{Farisito}.  The existence of a generalized 
 Gaussian random field on $H$ for a given 
 spatial covariance operator  $C$ is essentially a
 consequence of Kolmogorov's existence theorem. 
 The resulting distribution of the random field is a probability measure
 on the product space $\mathbb{R}^{H}$ with the
 sigma-field generated by finite dimensional rectangles.
 A nice approach to the construction of a stationary
 space-time Gaussian random field, having a prescribed
 spatial covariance operator and temporally dissipative semigroup (generator),
  is given in
 \cite{Farisito} for It\^o's stationary Ornstein-Uhlenbeck GRF
to be introduced below.
 \end{remark}
It may be noted by the Cauchy-Schwarz inequality that a generalized Gaussian random
field $G$, as a map $f\to G(f)$ from $H$ to
$L^2(\Omega,{\mathcal F},P)$, is continuous, and therefore
continuous  in probability. Also, if $C={\mathbf I}$ then the map
is a Hilbert space isometry. Thus the term `isonormal'.

As a matter of notation it is helpful to recall the form of
the  Ornstein-Uhlenbeck infinitesimal generator in the $n$ dimensional case,
and in a form extendable to  infinite-dimensional Hilbert space $H$
as developed in \cite{ito} and denoted there as an OU($\mathcal{L}(H))$-process. 
 \begin{remark} It\^o uses
 $\mathcal{L}(E)$ to denote the
vector space of linear functionals on a real vector
space $E$, and generally refers to a stochastic process
temporally indexed by $\mathbb{R}$ defined on a probability space
$(\Omega,\mathcal{F},P)$ with values in $\mathcal{L}(E)$ as
 an $\mathcal{L}(E)$-process.
 \end{remark}
 The generic form of the OU infinitesimal generator is 
$$Lf(x) = \frac{1}{2}C\Delta f(x) + \langle\Gamma x,\nabla f(x)\rangle,$$
where  $\Gamma $ is a dissipative operator with negative eigenvalues, $C$ is
a symmetric, positive definite diffusion coefficient with 
square root $\sqrt{C}\equiv C^{\frac{1}{2}}$. The corresponding Ornstein-Uhlenbeck process $Z$ may be 
viewed as determined by the stochastic differential 
equation
\begin{equation}\label{ousde}
dZ(t) = \Gamma Z(t)dt + \sqrt{C}dW(t),
\end{equation}
where $W$ is standard Brownian motion.
In the case that the initial distribution is the mean
zero Gaussian invariant distribution,  
\begin{equation}\label{oupi}
\pi(dx) = (2\pi)^{-\frac{n}{2}}(\text{det}Q)^{-\frac{1}{2}}
\exp\{-\frac{1}{2}\langle x,Q^{-1}x\rangle\}, \quad \Gamma C+ C\Gamma^*
= -Q,
\quad x\in{\mathbb R}^n,
\end{equation}
the second condition is the equilibrium condition. 
The two parameters $C,\Gamma$, respectively 
governing fluctuation and dissipation, 
  make their appearance
 in the {\it equilibrium} or {\it fluctuation-dissipation covariance}\footnote{The appearance of 
 $Q$ may be viewed as an illustration of the
 so-called fluctuation-dissipation theorem of statistical
 physics, and referred to as noise covariance in \cite{Farisito}.}
operator $Q$ given by (\ref{oupi}) as follows.  Detailed balance
is the condition that  
\begin{equation}
\Gamma C = C\Gamma^*.
\end{equation}
Setting 
\begin{equation}Q= -\frac{1}{2}\Gamma^{-1}C
\end{equation}
yields the equilibrium condition in (\ref{oupi}).

The {\it Lamperti transformation} of the $n=1$ dimensional
 centered standard Brownian motion process $W$ 
 for scalar parameters  $\Gamma\equiv\gamma < 0$,
$C\equiv c>0$, is defined by the time change 
\begin{equation}
t\to {Z}(t) := e^{{-|\gamma|} t}\sqrt{\frac{c}{2|\gamma|}}
{W}(e^{2|\gamma| t}), \ t\ge 0,
\end{equation}
makes  ${Z}$ a {\it stationary} Ornstein-Uhlenbeck process
on $\mathbb{R}$  with parameters
$\gamma<0$ and $c>0$ in
(\ref{oupi}). 

 As alluded to above, It\^o provides an elegant infinite dimensional version of (\ref{ousde}) where $\Gamma$
is an infinitesimal generator of a strongly continuous semigroup of positive contractions on the underlying Hilbert
space. However, it should also be noted that 
in the general theory, It\^o also uncovers
a possible additional `deterministic component' in
the stochastic differential equation that uniquely
determines his
Ornstein-Uhlenbeck process in the sense of equivalence of
finite dimensional distributions.  Moreover, It\^o shows
that given the Ornstein-Uhlenbeck process one can
recover a Brownian random field, referred to as 
the {\it innovation process},  that drives the 
stochastic differential equation. 

The organization is as follows. 
 Section \ref{grfsec}  provides the essential elements of Bhattacharya’s
clt from \cite{RBfclt}.  
 A characterization of UGRF
space-time Brownian random fields $B$ as well as a Lamperti-type transformation
to stationary UGRFs $Z$ of the infinite dimensional Ornstein-Uhlenbeck type
as conceived by  It\^o in \cite{ito} are included in this section. 
In particular it is shown that the Brownian UGRF $B$ is the
innovation process for the stationary UGRF $Z$ in the sense
of It\^o, and that It\^o's deterministic component vanishes
for $Z$. 
In Section \ref{contregsec} It\^o's theory of continuous
regularity is applied to the Brownian and stationary 
UGRFs; see Corollary \ref{riggedZcor}.
The two aforementioned Theorems \ref{dynkinugrf},  \ref{dirformthm},
are then presented in Section \ref{mainsec}. Loosely speaking, 
Theorem \ref{dynkinugrf} identifies
a class of GRFs associated with transient Markov processes that can be obtained as bounded
linear transformation of a Lamperti-type time-changed UGRF.
This result was originally motivated by
Nelson’s theory of the massive free boson on the torus in which the infinitesimal generator $\Delta-m^2$  is
that of a 
killed ergodic Brownian motion at a constant exponential rate $m^2> 0$.
From the perspective of 
Dynkin’s theory one would simply use the 
Greens function $-(\Delta-m^2)^{-1}$ to furnish the 
covariance operator of an {\it assumed} Gaussian random field.   
 Assuming further that the generalized
inverse  $A^{-1}$ is also
an infinitesimal generator, Theorem \ref{dirformthm}
 shows that the (temporally frozen) Brownian UGRF 
$\{B(f,1):f\in1^\perp_\pi\}$ for the underlying  ergodic
Markov process $X=\{X(t):t\ge 0\}$ with generator $A$ via \cite{RBfclt}
and Theorem \ref{spacetimecov} below, 
 is also the Gaussian random field 
associated with the Markov process $Y=\{Y(t): t\ge 0\}$  
defined by the Dirichlet form $-2\langle f,A^{-1}h\rangle_\pi,
f,h\in1_\pi^\perp$; i.e., in the senses of \cites{Dynkin1,DiaconisEvans}
noted earlier for finite state Markov processes.  
Example \ref{dirformeg} illustrates this 
result.  Although unrelated to its consideration here, this  simple 
two-point example was used  by Leonard Gross \cite{gross}
in his groundbreaking discovery of logarithmic Sobolev inequalities
in quantum field theory, and his derivation of  Nelson's hypercontraction
inequalities \cite{Nelson} via a central limit theorem. 

The
central limit theorem of \cite{RBfclt} does not require self-adjointness of
the infinitesimal generator.
So, while self-adjointness is central to physics, it is not required to define the Brownian UGRF,
nor the Stationary UGRF.

\section{Gaussian Random Field Preliminaries and Notation}\label{grfsec}
The Gaussian random field construction considered here 
is the result of the 
following central limit of Rabi Bhattacharya; also see Corollary \ref{equiv}.
\begin{thm}{Bhattacharya \cite{RBfclt}}
\label{rabifcltthm}
Suppose that $X = \{X_t:t\ge0\}$ is a progressively measurable
continuous parameter
Markov process on a measurable state space $(S,{\mathcal S})$
starting from an ergodic invariant probability $\pi$, and defined
on a complete probability space $(\Omega,{\mathcal F},P_\pi)$.   Then for centered
$f\in L^2(S,\pi)$, i.e., $\int_Sfd\pi=0$,
 belonging to the range ${\mathcal R}({ A})$
of the  infinitesimal generator 
$( A,{\mathcal D}({ A}))$ of a strongly continuous
semigroup on 
$L^2(S,\pi)$, the sequence 
$\mathcal{I}_n(f,\cdot) =\{I_n(f,t)\equiv n^{-\frac{1}{2}}\int_0^{nt}f(X_s)ds: t\ge 0\}, 
n=1,2,\dots$ converges weakly in $C[0,\infty)$ to Brownian
motion $B(f) =\{B(f,t):t\ge0\}$ starting at $0$ with zero drift and 
diffusion coefficient 
\begin{equation}\label{diffcoeff}
c(f) =  -2\langle  Ag,g\rangle_\pi =-2\langle f,A^{-1}f\rangle_\pi =,\quad  Ag = f.
\end{equation}
\end{thm}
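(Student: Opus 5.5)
The plan is the classical martingale-decomposition route (Gordin/Kipnis–Varadhan style), which is how Bhattacharya's result is usually proved. Fix $f\in\mathcal{R}(A)\cap 1_\pi^\perp$ and choose $g\in\mathcal{D}(A)$ with $Ag=f$; $g$ may be taken in $\mathcal{N}(A)^\perp$, i.e.\ $g=A^{-1}f$. By Dynkin's formula for the semigroup $T_t$ on $L^2(S,\pi)$, the process
\begin{equation*}
M(t)= g(X_t)-g(X_0)-\int_0^t f(X_s)\,ds
\end{equation*}
is a square-integrable martingale under $P_\pi$ with respect to the natural filtration. Progressive measurability makes the integral well defined, and completeness lets us pass to a right-continuous version. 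Hence
\begin{equation*}
I_n(f,t)= n^{-1/2}\bigl(g(X_{nt})-g(X_0)\bigr)-n^{-1/2}M(nt),
\end{equation*}
and the proof reduces to two claims: the boundary term is negligible uniformly on compacts, and $n^{-1/2}M(n\cdot)$ satisfies a martingale functional CLT.

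\textbf{Variance of the martingale.} Stationarity of $\pi$ gives that $M$ has stationary increments. Expanding $\mathbb{E}_\pi M(t)^2$ using the Markov property and $T_tg-g=\int_0^tT_sf\,ds$, one gets $\mathbb{E}_\pi M(t)^2=-2t\langle Ag,g\rangle_\pi$. This identifies
\[
c(f)=-2\langle Ag,g\rangle_\pi=-2\langle f,A^{-1}f\rangle_\pi .
\]
The value does not depend on which preimage $g$ is chosen, since null-space components cancel because $\langle f,h\rangle_\pi=0$ for $h\in\mathcal{N}(A)$ when $f\in\mathcal{R}(A)$. I would also record that $c(f)\ge0$, which follows from contractivity of $T_t$ on $L^2(\pi)$.

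\textbf{Martingale FCLT.} Apply the functional CLT for martingales with stationary ergodic increments (Billingsley/Helland), applied to the discrete skeleton $\xi_j=M(j)-M(j-1)$. The increments $\xi_j$ form a stationary square-integrable martingale-difference sequence, and they are ergodic because $\pi$ is ergodic for $X$. The Billingsley–Ibragimov theorem then gives $n^{-1/2}\sum_{j\le \lfloor nt\rfloor}\xi_j\Rightarrow$ Brownian motion with variance $\mathbb{E}\xi_1^2=c(f)$. The random-time fluctuation between integer times must be shown negligible. For this I would use Doob's maximal inequality on each block $[j-1,j]$ together with stationarity, so that
\[
\max_{j\le nT} n^{-1/2}\sup_{s\in[j-1,j]}|M(s)-M(j-1)|\to0
\]
in probability by a Lindeberg-type union bound: $\sum_j P(\cdot>\epsilon\sqrt n)\le nT\,\epsilon^{-2}n^{-1}\mathbb{E}[\sup^2\mathbf 1_{\{\sup>\epsilon\sqrt n\}}]\to0$.

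\textbf{Main obstacle: the boundary term.} The step I expect to be hardest is controlling $n^{-1/2}\sup_{t\le T}|g(X_{nt})|$, since $g$ is only in $L^2$ and we need uniform control in $C[0,\infty)$. I would first write $g(X_{nt})$ through the martingale identity itself: $g(X_{u})=g(X_{\lfloor u\rfloor})+\int_{\lfloor u\rfloor}^u f(X_s)ds+M(u)-M(\lfloor u\rfloor)$. The martingale piece is handled as above. For the integral piece, the bound $\mathbb{E}(\int_{j-1}^j|f(X_s)|ds)^2\le\|f\|_\pi^2$ plus the same truncated union bound gives negligibility. For the integer-time piece, $\max_{j\le nT}|g(X_j)|/\sqrt n\to0$ in probability again by $\sum_j P(|g(X_j)|>\epsilon\sqrt n)=nT\,P_\pi(|g|>\epsilon\sqrt n)\le T\epsilon^{-2}\int_{\{|g|>\epsilon\sqrt n\}}g^2d\pi\to0$. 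Finally, $I_n$ is continuous in $t$, and the limit of the martingale part is continuous, so convergence holds in $C[0,\infty)$ via convergence on each $[0,T]$.
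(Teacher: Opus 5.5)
The paper gives no proof of this theorem; it is quoted from Bhattacharya. Your martingale decomposition is the standard route to it, and several parts of your argument are fine: the variance identity $\mathbb{E}_\pi M(t)^2=-2t\langle Ag,g\rangle_\pi$, the Doob/union-bound control of oscillations inside blocks, and your treatment of the boundary term.

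The genuine gap is the sentence ``the increments $\xi_j$ are ergodic because $\pi$ is ergodic for $X$.'' Ergodicity of $\pi$ for $X$ only says that functions fixed by \emph{every} $T_t$ are constant. By the Markov property, however, the sequence of unit-time blocks $(X_s)_{s\in[j-1,j]}$ is ergodic if and only if the single operator $T_1$ fixes only constants. This fails as soon as $A$ has an eigenvalue $\lambda\neq0$ with $e^{\lambda}=1$. Nothing in the theorem excludes that, since $A$ need not be self-adjoint.

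For example, the deterministic rotation $X_t=X_0+t \pmod{1}$ on the circle with uniform $\pi$ is ergodic in continuous time, yet $X_1=X_0$. Taking its product with an independent reversible chain gives examples with $M\not\equiv0$ and a non-ergodic unit skeleton. Without ergodicity, the Billingsley--Ibragimov theorem only yields a mixture of centered normals with random variance $\mathbb{E}[\xi_1^2\mid\mathcal{I}_1]$, where $\mathcal{I}_1$ is the invariant $\sigma$-field of the unit shift. You would still have to show that this equals $c(f)$ almost surely. The theorem is true, so this can be done, but it is an extra step your proposal does not contain.

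Two repairs work.

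(i) Use the point spectral mapping theorem for $C_0$-semigroups on the complexified space: the fixed space of $T_\delta$ is the closed span of the eigenspaces of $A$ at those $\lambda$ with $e^{\lambda\delta}=1$. For a contraction semigroup, eigenvectors for distinct imaginary eigenvalues are orthogonal. So when $L^2(S,\pi)$ is separable, only countably many $\delta$ are bad. You can then run your whole argument with blocks of a good length $\delta$, with skeleton variance $\delta c(f)$. When $A$ is self-adjoint, as under \textbf{RFA1}, every $\delta$ is good and your sentence is correct as written.

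(ii) Alternatively, invoke a martingale FCLT that needs only $n^{-1}\sum_{j\le nt}\xi_j^2\to c(f)t$ in probability plus Lindeberg, which is automatic from stationarity. Get the variance condition by comparing $\sum_{j\le N}\xi_j^2$ with the quadratic variation $[M]_N$. Since $[M]$ is additive and increasing, $\lim_N [M]_N/N$ is invariant under all real time shifts, hence equals $c(f)$ by continuous-time ergodicity.

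Two smaller justifications should also be corrected.
\begin{enumerate}
\item The c\`adl\`ag version of $M$ comes from its $L^2$-continuity, $\mathbb{E}_\pi(M(t+h)-M(t))^2=-2h\langle Ag,g\rangle_\pi$, not from completeness of the probability space.
\item The claim $\langle f,h\rangle_\pi=0$ for $h\in\mathcal{N}(A)$ is not a general consequence of $f\in\mathcal{R}(A)$, since in general $\mathcal{R}(A)^\perp=\mathcal{N}(A^*)$. It holds here because ergodicity makes $\mathcal{N}(A)$ the constants and invariance of $\pi$ gives $\mathcal{R}(A)\subset1_\pi^\perp$.
\end{enumerate}
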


As already 
noted in (\cite{RBfclt}, Remark 2.1.1),
 using linearity of the integrals and the Cram\'{e}r-Wold device, 
if $f_1,\dots,f_m\in{\mathcal R}({ A})$,  then
 $(\mathcal{I}_n(f_1,\cdot),\dots \mathcal{I}_n(f_m,\cdot))$ converges weakly as $n\to\infty$
to $m$-dimensional Brownian motion with zero drift
and spatial covariance 
\begin{equation}\label{cltcov}
((c(f_i,f_j)))_{1\le i,j\le m} = ((-\langle Ag_i,g_j\rangle_\pi-\langle g_i, Ag_j\rangle_\pi))_{1\le i,j\le m},
\end{equation}
where $ Ag_i=f_i, 1\le i\le m$. 
In this regard,  the spatial covariance
operator $C$ on $\mathcal{R}(A)$ is given by 
the self-adjoint operator
\begin{equation}\label{rbcov}
C = -(A^{-1} + A{^{-1}}^*).
\end{equation}

Based on this,  the following  theorem provides
our essential starting point.

\begin{thm}{(Universal Gaussian Random Field (UGRF) Limit)}
\label{spacetimecov}
The sequence of space-time 
random fields $\{\mathcal{I}_n(\cdot,\cdot)\}_{n=1}^\infty$
converges in distribution to a space-time generalized
Brownian motion $\{B(f,t):f\in\mathcal{R}(A),t\ge 0\}$
with spatial covariance $C$ given by (\ref{rbcov})
in the sense that 
\begin{itemize}
\item For fixed $(f_1,\dots,f_m)\in\mathcal{R}(A)^m, m\ge 1$,
$(\mathcal{I}_n(f_1,\cdot),\dots,\mathcal{I}_n(f_m,\cdot)), n\ge 1$, converges weakly to $(B(f_1,\cdot),\dots,B(f_m,\cdot))\in C([0,\infty);\mathbb{R}^m)$,
\item For fixed but arbitrary 
 $t_1<\cdots<t_m, f_1,\cdots f_m\in\mathcal{R}(A), m\ge 1$, the $m$-dimensional random vector
 $(\mathcal{I}_n(f_1,t_1),\cdots,\mathcal{I}_n(f_m,t_m))\in\mathbb{R}^m$, 
converges in distribution to the Gaussian random vector $(B(f_1,t_1),\dots,B(f_m,t_m))$.
\end{itemize}
\end{thm}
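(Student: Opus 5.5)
The plan is to derive both bullets from Theorem \ref{rabifcltthm} together with the multivariate extension noted after it (\cite{RBfclt}, Remark 2.1.1). The only remaining work is to check that the limiting objects fit together into a single space-time generalized Brownian motion in the sense of Definition \ref{spacebm}(ii).

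\textbf{Step 1: existence of the limit field.} First we construct the target field $B$. The operator $C=-(A^{-1}+{A^{-1}}^*)$ is symmetric on $\mathcal{R}(A)$. It is non-negative, since for $f=Ag$,
\[
\langle f,Cf\rangle_\pi=-2\langle Ag,g\rangle_\pi=c(f)\ge 0 ,
\]
which is the limiting variance in (\ref{diffcoeff}). Hence the kernel
\[
K((f_1,s),(f_2,t))=(s\wedge t)\langle f_1,Cf_2\rangle_\pi
\]
is positive semidefinite on $\mathcal{R}(A)\times[0,\infty)$. This holds because a product of the positive semidefinite kernels $s\wedge t$ and $\langle f_1,Cf_2\rangle_\pi$ is positive semidefinite (Schur). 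Kolmogorov's existence theorem then gives a centered Gaussian family $\{B(f,t)\}$ with covariance (\ref{cylwhite}). Linearity in $f$ holds a.s. for each finite collection, because $E|B(af+bh,t)-aB(f,t)-bB(h,t)|^2=0$ by bilinearity of $K$. For fixed $f_1,\dots,f_m$, the process $t\mapsto(B(f_1,t),\dots,B(f_m,t))$ has the covariance of an $m$-dimensional Brownian motion with covariance matrix $((c(f_i,f_j)))$. By Kolmogorov's continuity criterion it admits a continuous version, and we choose that version.

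\textbf{Step 2: first bullet.} Fix $f_1,\dots,f_m\in\mathcal{R}(A)$. The joint process $(\mathcal{I}_n(f_1,\cdot),\dots,\mathcal{I}_n(f_m,\cdot))$ converges weakly in $C([0,\infty);\mathbb{R}^m)$. To see this, apply Theorem \ref{rabifcltthm} to $\sum_i a_if_i\in\mathcal{R}(A)$; linearity gives $\mathcal{I}_n(\sum a_if_i,\cdot)=\sum a_i\mathcal{I}_n(f_i,\cdot)$. Tightness in $C([0,\infty);\mathbb{R}^m)$ follows from coordinatewise tightness, since each $\mathcal{I}_n(f_i,\cdot)$ converges. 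Cramér--Wold applied to the finite-dimensional distributions, combined with tightness, then identifies the limit as Brownian motion with covariance (\ref{cltcov}). By Step 1 this limit is exactly the law of $(B(f_1,\cdot),\dots,B(f_m,\cdot))$.

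\textbf{Step 3: second bullet.} Given $t_1<\dots<t_m$ and $f_1,\dots,f_m$, apply Step 2 to the vector $(f_1,\dots,f_m)$. Evaluation at the times $(t_1,\dots,t_m)$, taking the $i$-th coordinate at time $t_i$, is a continuous map $C([0,\infty);\mathbb{R}^m)\to\mathbb{R}^m$. The continuous mapping theorem therefore yields convergence to $(B(f_1,t_1),\dots,B(f_m,t_m))$, which is Gaussian with the covariance $(t_i\wedge t_j)\langle f_i,Cf_j\rangle_\pi$.

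The only delicate points are checking that $c(f_i,f_j)$ in (\ref{cltcov}) is truly the bilinear form $\langle f_i,Cf_j\rangle_\pi$ with $C$ from (\ref{rbcov}), and that the tightness of the vector is justified. Both are routine. Note that continuity in probability of $f\mapsto B(f,t)$ is with respect to the seminorm $\langle f,Cf\rangle^{1/2}$, which is all that is claimed here.
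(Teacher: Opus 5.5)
Your overall route is the paper's. The paper takes the joint functional limit of $(\mathcal{I}_n(f_1,\cdot),\dots,\mathcal{I}_n(f_m,\cdot))$ directly from \cite{RBfclt}, Remark 2.1.1, and the theorem restates it, with the second bullet obtained by evaluation at $(t_1,\dots,t_m)$ as in your Step 3. In Step 2, however, you do not use that vector statement as given. You re-derive it from the scalar Theorem~\ref{rabifcltthm}, and that derivation has a gap.

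Applying Theorem~\ref{rabifcltthm} to $h=\sum_i a_if_i$ tells you only that every subsequential limit $Y$ of the tight vector has each projection $\langle a,Y(\cdot)\rangle$ distributed as a Brownian motion with variance $c(\sum_i a_if_i)$. With Cram\'er--Wold this fixes the one-time marginals of $Y$. For the multi-time finite-dimensional distributions, though, Cram\'er--Wold needs the limit law of $\sum_l\mathcal{I}_n(h_l,t_l)$ with $h_l=\sum_i a_{il}f_i$ depending on $l$, and no application of the scalar theorem supplies that. At the level of covariances, the projection laws only determine the symmetrized cross-covariances $\mathbb{E}Y_i(s)Y_j(t)+\mathbb{E}Y_j(s)Y_i(t)$, and this does not determine $Y$. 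For a counterexample, take the centered Gaussian pair with $\mathbb{E}Y_i(s)Y_i(t)=s\wedge t$ for $i=1,2$ and $\mathbb{E}Y_1(s)Y_2(t)=\delta\big((s\wedge 1)(t\wedge 2)-(s\wedge 2)(t\wedge 1)\big)$; this kernel is positive semidefinite for small $\delta>0$. Every $\langle a,Y\rangle$ is a Brownian motion with variance $|a|^2$, yet $\mathbb{E}Y_1(1)Y_2(2)=\delta\neq 0$, so $Y$ is not a planar Brownian motion. The identification step is therefore not routine.

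What is missing is asymptotic independence of increments over disjoint time blocks, jointly across different functions. You can close the gap in either of two ways.
\begin{itemize}
\item Cite the vector form of Bhattacharya's result as a black box, as the paper does.
\item Use the martingale decomposition behind Theorem~\ref{rabifcltthm}. With $f_i=Ag_i$, write $\int_0^t f_i(X_s)\,ds=g_i(X_t)-g_i(X_0)-M_i(t)$, where $M_i$ is a square-integrable martingale with stationary increments. By stationarity, the boundary terms $n^{-1/2}(g_i(X_{nt})-g_i(X_0))$ tend to $0$ in probability at each fixed $t$. Then apply the martingale central limit theorem to the piecewise-constant-coefficient martingale $n^{-1/2}\sum_l\langle a_l,M(nt_l)-M(nt_{l-1})\rangle$, whose normalized quadratic variation converges blockwise by ergodicity. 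This gives independent Gaussian block increments with covariance $(t_l-t_{l-1})((c(f_i,f_j)))$.
\end{itemize}
Combined with your tightness argument, either route identifies the limit. Steps 1 and 3 and the coordinatewise tightness argument are fine as written.
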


 The following corollary, originally proven 
in \cite{kipvar},
is implied by Theorem \ref{rabifcltthm},
making the clt's of \cite{RBfclt}
and \cite{kipvar} equivalent in this
special case; see \cite{wayfclt}. 
 Since 
the adjoint $ A^*$  of a densely defined 
operator $ A$ on $L^2(S,\pi)$ is always closed,
the
strong continuity assumption involved in the application of the
Hille-Yosida theorem can be
avoided in the self-adjoint case to obtain
a densely defined closed operator $ A$
for which $ A^{-1}$ therefore exists as
the generalized inverse.   
Let also note  that one may check that  self-adjointness is maintained
by the generalized inverse of a self-adjoint operator; 
also see \cite{Majumdar}.

\begin{cor}\label{equiv} If $ A$ is self-adjoint then Theorem \ref{rabifcltthm}
is valid for  $f\in\mathcal{D}({ A^{-\frac{1}{2}}})\cap1_\pi^\perp
\supset\mathcal{R}(A)$. 
\end{cor}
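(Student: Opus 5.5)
The plan is to obtain the extension from $\mathcal{R}(A)$ to $\mathcal{D}(A^{-\frac12})\cap 1_\pi^\perp$ by an approximation argument built on Theorem \ref{rabifcltthm}, using the spectral theorem for the self-adjoint operator $-A\ge 0$. The semigroup is contractive, so $-A$ is non-negative and self-adjoint with spectral resolution $-A=\int_{[0,\infty)}\lambda\,E(d\lambda)$. Ergodicity of $\pi$ gives $\mathcal{N}(A)=\text{span}\{1\}$. Hence $1_\pi^\perp=\mathcal{N}(A)^\perp=\overline{\mathcal{R}(A)}$, and on this space $E(\{0\})=0$.

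Define $\mathcal{D}(A^{-\frac12})$ as the set of $f\in1_\pi^\perp$ with
$$\|f\|_{-1}^2:=\int_{(0,\infty)}\lambda^{-1}\langle E(d\lambda)f,f\rangle_\pi<\infty .$$
For $f=Ag\in\mathcal{R}(A)$ this equals $\langle -A g,g\rangle_\pi=-\langle f,A^{-1}f\rangle_\pi$, which gives the inclusion $\mathcal{R}(A)\subset\mathcal{D}(A^{-\frac12})\cap1_\pi^\perp$. It also shows that $c(f)=2\|f\|_{-1}^2$ on $\mathcal{R}(A)$.

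The key step is a variance estimate uniform in $n$. For $h\in1_\pi^\perp$, stationarity and symmetry give
$$\mathbb{E}_\pi I_n(h,t)^2=\frac{2}{n}\int_0^{nt}(nt-s)\langle h,T_sh\rangle_\pi ds .$$
By the spectral theorem this is
$$\frac{2}{n}\int_0^\infty\int_0^{nt}(nt-s)e^{-\lambda s}\,ds\,\langle E(d\lambda)h,h\rangle_\pi\le 2t\int_{(0,\infty)}\lambda^{-1}\langle E(d\lambda)h,h\rangle_\pi=2t\|h\|_{-1}^2 ,$$
using $\int_0^{nt}(nt-s)e^{-\lambda s}ds\le nt/\lambda$.

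Next I would approximate. Given $f\in\mathcal{D}(A^{-\frac12})\cap1_\pi^\perp$, set $f_k=E((1/k,k])f$. Each $f_k$ lies in $\mathcal{R}(A)$, since $f_k=A g_k$ with $g_k=-\int_{(1/k,k]}\lambda^{-1}E(d\lambda)f$. By dominated convergence, $\|f-f_k\|_{-1}\to0$. Theorem \ref{rabifcltthm} applies to each $f_k$, with diffusion coefficient $c(f_k)=2\|f_k\|_{-1}^2\to2\|f\|_{-1}^2=:c(f)$. The uniform estimate gives
$$\sup_n\mathbb{E}\,|I_n(f,t)-I_n(f_k,t)|^2\le2t\|f-f_k\|_{-1}^2\to0 .$$
A standard approximation theorem for weak convergence (Billingsley, Theorem 3.2) then yields convergence of the finite-dimensional distributions of $\mathcal{I}_n(f,\cdot)$ to Brownian motion with coefficient $c(f)$.

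I expect the main obstacle to be upgrading this to weak convergence in $C[0,\infty)$. The pointwise $L^2$ bound has to become a maximal bound on $\sup_{t\le T}|I_n(f-f_k,t)|$, uniform in $n$. For this I would use a maximal inequality for additive functionals of reversible processes: Kipnis–Varadhan's $H_{-1}$ estimate
$$\mathbb{E}\sup_{t\le T}\Bigl|\int_0^t h(X_s)ds\Bigr|^2\le C\,T\,\|h\|_{-1}^2 ,$$
obtained by the forward–backward martingale decomposition under time reversal. After the $n^{-\frac12}$ scaling this gives $\sup_n\mathbb{E}\sup_{t\le T}|I_n(f-f_k,t)|^2\le CT\|f-f_k\|_{-1}^2$. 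Combined with the tightness of $\mathcal{I}_n(f_k,\cdot)$ from Theorem \ref{rabifcltthm}, this closes the argument through the same approximation theorem applied in $C[0,\infty)$.
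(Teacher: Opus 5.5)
Your proof is correct. It supplies an argument the paper does not actually give: the paper attributes the corollary to \cite{kipvar} and asserts that it is implied by Theorem~\ref{rabifcltthm}, with a pointer to \cite{wayfclt}. Beyond that it only remarks that self-adjointness makes $A$ closed, so the generalized inverse exists.

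Your route makes the reduction explicit, and it shows exactly where the argument stays inside Theorem~\ref{rabifcltthm}. The finite-dimensional convergence genuinely follows from it. The spectral cutoffs $f_k=E((1/k,k])f\in\mathcal{R}(A)$, the uniform bound $\mathbb{E}_\pi I_n(h,t)^2\le 2t\|h\|_{-1}^2$ and Billingsley's approximation theorem are all that is needed.

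The upgrade to $C[0,\infty)$, however, rests on the Kipnis--Varadhan maximal inequality, which is the central estimate of \cite{kipvar} itself. So your proof obtains the corollary by combining Bhattacharya's theorem with Kipnis--Varadhan's key tool, not from Theorem~\ref{rabifcltthm} alone as the paper's equivalence remark suggests. The reason you need it is this. Bhattacharya's forward decomposition $\int_0^t h(X_s)ds=g(X_t)-g(X_0)-M_t$, with $Ag=h$, controls the martingale part through $\|h\|_{-1}$. The boundary term, however, is controlled only through $\|A^{-1}h\|_\pi$, which is not dominated by $\|h\|_{-1}$ without a spectral gap. That gapless case is the only one in which the corollary has content, since with a gap $\mathcal{R}(A)=1_\pi^\perp$. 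The backward martingale under time reversal is what cancels this boundary term.

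Two details deserve a line each in a write-up:
\begin{itemize}
\item Path regularity is harmless under mere progressive measurability. The map $t\mapsto\int_0^t h(X_s)ds$ is continuous, so Doob's inequality can be applied over rational times.
\item The maximal inequality, first obtained for $h\in\mathcal{R}(A)$, must be extended to $h=f-f_k\notin\mathcal{R}(A)$. This can be done via $E((1/j,\infty))h\in\mathcal{R}(A)$ and Fatou, or via the resolvent as in \cite{kipvar}.
\end{itemize}
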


The following assumptions on UGRFs and the underlying
{\it ergodic} Markov processes going forward.
 
\

\noindent{\bf Random Field Assumptions 1 ({\bf RFA1}):}
$ A$ is assumed to be a densely defined self-adjoint infinitesimal
generator  on $L^2(S,\pi)$ of a strongly continuous semigroup
for an ergodic invariant probability $\pi$
such
that $ A^{-1}$ is a bounded linear operator on
$\mathcal{R}(A)=1_\pi^\perp$.

\

The assumption 
{\bf RFA1} is essentially that $ A$ is the
 infinitesimal generator of a time-reversible
 strongly continuous
 semigroup on $L^2(S,\pi)$ for an ergodic invariant
 probability $\pi$, 
  for which $0$ is a simple,
 isolated eigenvalue in the spectrum of $ A$, i.e., 
 there is a spectral gap and hence geometric ergodicity;
 see \cite{RBfclt}. 
The UGRFs
fall within the  framework of temporally evolving
spatial generalized Gaussian random
fields over a Hilbert space as given in Definition \ref{spacebm}.
The following corollary places Theorem \ref{spacetimecov} 
in this framework. 
\begin{cor}\label{kcylcor}
 Under {\bf RFA1}, 
the UGRF limit is a space-time Brownian motion on $H=1_\pi^\perp$
with the spatial covariance operator 
\begin{equation}
C = -2 A^{-1}.
\end{equation}
\end{cor}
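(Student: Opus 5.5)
Under \textbf{RFA1} we have $\mathcal{R}(A)=1_\pi^\perp$, so Theorem \ref{spacetimecov} already yields a limiting family $\{B(f,t):f\in 1_\pi^\perp, t\ge0\}$ whose finite-dimensional distributions are those of an $m$-dimensional Brownian motion with covariance (\ref{cltcov}). It remains to check three things: this covariance equals $(s\wedge t)\langle f_1,-2A^{-1}f_2\rangle_\pi$; the operator $C=-2A^{-1}$ qualifies as a spatial covariance operator on $H=1_\pi^\perp$; and $f\mapsto B(f,t)$ is linear and continuous in probability, as Definition \ref{spacebm} requires.

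\emph{Identifying the covariance.} Since $A$ is self-adjoint, its generalized inverse is self-adjoint as well, as noted before Corollary \ref{equiv}. Hence $A^{-1*}=A^{-1}$ on $1_\pi^\perp$, and (\ref{rbcov}) becomes $C=-(A^{-1}+A^{-1})=-2A^{-1}$. Alternatively, write $Ag_i=f_i$ with $g_i=A^{-1}f_i$. Then $-\langle Ag_i,g_j\rangle_\pi-\langle g_i,Ag_j\rangle_\pi=-\langle f_i,A^{-1}f_j\rangle_\pi-\langle A^{-1}f_i,f_j\rangle_\pi=-2\langle f_i,A^{-1}f_j\rangle_\pi$. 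For the time dependence, each coordinate is a Brownian motion with independent increments and the vector process has joint covariance matrix $c(f_i,f_j)$ per unit time. This gives $\mathbb{E}B(f_1,s)B(f_2,t)=(s\wedge t)c(f_1,f_2)$, which is (\ref{cylwhite}).

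\emph{Properties of $C$.} $C$ is bounded because $A^{-1}$ is bounded by \textbf{RFA1}, and it is self-adjoint by the previous step. For non-negativity, the semigroup $T_t=e^{tA}$ consists of contractions on $L^2(S,\pi)$, so $A$ is dissipative: $\langle Ag,g\rangle_\pi\le 0$ for $g\in\mathcal{D}(A)$. Taking $g=A^{-1}f$ gives $\langle f,-2A^{-1}f\rangle_\pi=-2\langle Ag,g\rangle_\pi\ge0$. This also agrees with $c(f)\ge0$ in (\ref{diffcoeff}), as it must, being a variance.

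\emph{Linearity and continuity.} Linearity requires some care, since $B$ is defined only as a limit in distribution. The prelimit maps $f\mapsto I_n(f,t)$ are linear. Applying the Cram\'er--Wold argument to $(f_1,f_2,af_1+bf_2)$ shows that $B(af_1+bf_2,t)-aB(f_1,t)-bB(f_2,t)$ is Gaussian with variance
$$\langle h,Ch\rangle_\pi=0,\qquad h=af_1+bf_2-af_1-bf_2=0,$$
by bilinearity of the covariance. Hence linearity holds almost surely. Continuity in probability follows from $\mathbb{E}(B(f,t)-B(h,t))^2=t\langle f-h,C(f-h)\rangle_\pi\le t\|C\|\,\|f-h\|_\pi^2$, using Cauchy--Schwarz as remarked after Definition \ref{spacebm}. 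The only point needing any care is this almost-sure linearity, that is, realizing the limit on one probability space consistently, for example via Kolmogorov's existence theorem for the Gaussian family with the covariance above. Everything else is direct bookkeeping from Theorem \ref{spacetimecov}.
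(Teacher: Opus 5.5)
Your proposal is correct and follows the paper's implicit argument. The paper gives no separate proof and treats the corollary as immediate from Theorem \ref{spacetimecov}: once $\mathcal{R}(A)=1_\pi^\perp$, self-adjointness of the generalized inverse collapses (\ref{rbcov}) to $C=-2A^{-1}$, which is exactly your first step. Your further checks (non-negativity via dissipativity, almost-sure linearity via the degenerate Cram\'er--Wold limit, continuity in probability, and Kolmogorov consistency) fill in the requirements of Definition \ref{spacebm} that the paper leaves unstated.
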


The following theorem is well-known in the Gaussian
random field theory but included here to highlight the
role of trace-class operators.  
by a {\it regular version} is
meant that  there is a $H$-valued 
random variable $Z\in L^2(\Omega:H)$, i.e.,
${\mathbb E}||Z||^2_H<\infty$,  such
that 
\begin{equation*}
Y(f) = \langle Z,f\rangle_H, \ f\in H.
\end{equation*}

\begin{thm}\label{regthm}
A zero mean generalized Gaussian random field $Y$ 
has a regular version
 on a separable Hilbert space $H$ with covariance
operator $C$ if and only if $C$ is trace-class, i.e., has finite trace.
\end{thm}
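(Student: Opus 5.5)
We prove the two implications separately, working throughout with a fixed orthonormal basis $\{e_k\}_{k\ge1}$ of the separable Hilbert space $H$. The key identity is
\begin{equation*}
{\mathbb E}\,Y(e_k)^2=\langle e_k,Ce_k\rangle_H ,
\end{equation*}
which links the second moments of the field on the basis to the trace of $C$.

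\emph{Necessity.} Suppose $Y(f)=\langle Z,f\rangle_H$ with ${\mathbb E}\|Z\|_H^2<\infty$. By Parseval, $\|Z\|_H^2=\sum_k\langle Z,e_k\rangle_H^2=\sum_k Y(e_k)^2$. Taking expectations and using monotone convergence gives
\begin{equation*}
{\mathbb E}\|Z\|_H^2=\sum_k\langle e_k,Ce_k\rangle_H .
\end{equation*}
Since $C$ is non-negative and self-adjoint, the right side is its trace, and it is independent of the basis chosen. So $C$ is trace class. This direction is essentially immediate.

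\emph{Sufficiency.} Assume $\mathrm{tr}\,C<\infty$. Define the partial sums $Z_N=\sum_{k\le N}Y(e_k)e_k$, which are $H$-valued random variables. For $M<N$,
\begin{equation*}
{\mathbb E}\|Z_N-Z_M\|_H^2=\sum_{M<k\le N}\langle e_k,Ce_k\rangle_H\to0,
\end{equation*}
because the tail of a convergent series vanishes. Hence $(Z_N)$ is Cauchy in $L^2(\Omega;H)$, and we let $Z$ be its limit. By Fatou or directly, ${\mathbb E}\|Z\|_H^2=\mathrm{tr}\,C<\infty$.

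It remains to check that $\langle Z,f\rangle_H=Y(f)$ almost surely for each fixed $f\in H$. Write $f_N=\sum_{k\le N}\langle f,e_k\rangle_H e_k$. Then $\langle Z_N,f\rangle_H=\sum_{k\le N}\langle f,e_k\rangle_H Y(e_k)=Y(f_N)$, using linearity of $Y$, which holds almost surely for finitely many terms. The left side converges in $L^2(\Omega)$ to $\langle Z,f\rangle_H$, since $|\langle Z_N-Z,f\rangle_H|\le\|Z_N-Z\|_H\|f\|_H$. The right side converges to $Y(f)$ in $L^2(\Omega)$, because
\begin{equation*}
{\mathbb E}\,(Y(f)-Y(f_N))^2=\langle f-f_N,C(f-f_N)\rangle_H\le\|C\|\,\|f-f_N\|_H^2\to0.
\end{equation*}
This last step is exactly the $L^2$-continuity noted after Definition~\ref{spacebm}, via Cauchy–Schwarz and boundedness of $C$. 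Identifying the two $L^2$ limits gives $Y(f)=\langle Z,f\rangle_H$ almost surely, as required.

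\emph{Main obstacle.} The delicate point is the meaning of ``version.'' The identity $Y(f)=\langle Z,f\rangle_H$ holds almost surely for each $f$ separately, with a null set depending on $f$. This is precisely the modification sense appropriate to a field indexed by the uncountable set $H$, so no simultaneous statement over all $f$ is needed. One should also justify that $\mathrm{tr}\,C$ is basis-independent; this follows from the positivity of $C$, via $\sum_k\|C^{1/2}e_k\|_H^2$ being the squared Hilbert–Schmidt norm of $C^{1/2}$. Gaussianity of $Y$ plays no role beyond the second-moment structure.
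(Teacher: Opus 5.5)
Your proof is correct. Your necessity direction matches the paper's: Parseval plus monotone convergence give $\mathbb{E}\|Z\|_H^2=\sum_k\langle e_k,Ce_k\rangle_H$.

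For sufficiency you take a genuinely different route.

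\begin{itemize}
\item \emph{The paper's route.} It invokes the spectral theorem for the compact self-adjoint operator $C$ to choose an eigenbasis $\{e_j\}$ with eigenvalues $\gamma_j\ge 0$, so that the $Y(e_j)$ are independent $N(0,\gamma_j)$ variables. It then sets $Z=\sum_j Y(e_j)e_j$ and notes $\mathbb{E}\|Z\|_H^2=\sum_j\gamma_j<\infty$. Finally it checks that $f\mapsto\langle Z,f\rangle_H$ has covariance $\langle f_1,Cf_2\rangle_H$, i.e.\ the same finite-dimensional distributions as $Y$.
\item \emph{Your route.} You work with an arbitrary orthonormal basis and obtain $Z$ as the limit of a Cauchy sequence in $L^2(\Omega;H)$. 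You then identify $\langle Z,f\rangle_H$ with $Y(f)$ almost surely for each fixed $f$ by comparing two $L^2(\Omega)$ limits, using $\mathbb{E}(Y(f)-Y(f_N))^2\le\|C\|\,\|f-f_N\|_H^2$.
\end{itemize}

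What your approach buys:
\begin{itemize}
\item It is more elementary: no spectral theorem, and no use of independence or Gaussianity.
\item It gives a stronger conclusion. The paper's definition of a regular version literally asks for $Y(f)=\langle Z,f\rangle_H$, and you prove this as a modification. The paper's covariance check by itself only gives equality in law, even though its $Z$ is built from $Y$ and the almost sure identification could be added.
\end{itemize}

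What the paper's eigenbasis approach buys: it produces the Karhunen--Lo\`eve expansion with independent coordinates $Y(e_j)\sim N(0,\gamma_j)$. This displays the law of $Z$ explicitly as the Gaussian measure $\mathcal{N}(0,C)$ on $H$, a form that is convenient for later use.
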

\begin{proof}  Let $\{e_j\}$ be a (denumerable) orthonormal basis for H.
If $Y(f) = \langle Z,f\rangle_H$, for an $H$-valued
random variable $Z$,
\begin{equation*}
\text{tr}(C) = \sum_j \langle e_j,Ce_j\rangle_H
= {\mathbb E}Y^2(f)
=\sum_j{\mathbb E}\langle Z,\varphi_j\rangle_H^2
= {\mathbb E}||Z||_H^2<\infty.
\end{equation*}
Conversely, noting that a self-adjoint trace class covariance operator $C$
implies that $C$ is compact, in fact Hilbert-Schmidt type, and therefore has an o.n. basis of
eigenfunctions $\{e_j\}$ with non-negative eigenvalues $\{\gamma_j\}$.
Note that $\mathbb{E}Y(e_j)Y(e_k) = \langle e_j,Ce_k\rangle_H
= \langle e_j,\gamma_ke_k\rangle_H=  \gamma_j\delta_{jk}$.
So $Y(e_j), j\ge 1$, is a sequence of independent Gaussian random
variables with mean zero and variance $\gamma_j, j\ge 1$.
 Define $Z=\sum_jY(e_j)e_j$.
Then $\mathbb{E}||Y||_H^2 = \sum_j\gamma_j <\infty$. 
Thus $||Z||_H<\infty$ a.s., so that $Z\in H$ a.s..  The distribution
of $\langle Z,f\rangle_H, f\in H,$ is a Gaussian probability measure on $H$ with
the covariance operator $C$, i.e., 
\begin{equation}\mathbb{E}[\langle Z,f_1\rangle_H
\langle Z,f_2\rangle_H] = \sum_j\langle f_1,e_j\rangle_H\langle f_2,
\gamma_je_j\rangle_H = \sum_j\langle f_1,e_j\rangle_H\langle f_2,Ce_j\rangle_H = \langle f_1,Cf_2\rangle_H.\end{equation}
\end{proof}

In the setting of a trace-class covariance operator
the continuity in probability
is elevated to an almost sure property.
\begin{cor}\label{traceclasscor}
 If, in addition to {\bf RFA1}, $ A^{-1}$
is trace-class on $1_\pi^\perp$, then
$f\to B(f), f\in 1_\pi^\perp$ is a.s. continuous.
\end{cor}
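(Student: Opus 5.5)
The plan is to combine Corollary \ref{kcylcor} with Theorem \ref{regthm}, applied at each fixed time $t$, and then to upgrade the statement to a path-valued one. Under {\bf RFA1}, Corollary \ref{kcylcor} identifies the UGRF limit as a space-time Brownian motion on $H=1_\pi^\perp$ with spatial covariance operator $C=-2A^{-1}$. By self-adjointness of $A$ and negativity of the generator on $1_\pi^\perp$, $C$ is bounded, self-adjoint and non-negative. If $A^{-1}$ is trace-class, then so is $C$, with $\text{tr}(C)=2\,\text{tr}(-A^{-1})<\infty$. For each fixed $t\ge 0$ the field $f\mapsto B(f,t)$ is a generalized Gaussian random field with covariance $tC$, which is also trace-class. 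Theorem \ref{regthm} then supplies an $H$-valued random variable $Z_t\in L^2(\Omega;H)$ with $B(f,t)=\langle Z_t,f\rangle_H$ almost surely for each $f$.

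Concretely, take an orthonormal eigenbasis $\{e_j\}$ of $C$ with eigenvalues $\gamma_j\ge 0$ and $\sum_j\gamma_j<\infty$, and set
\begin{equation*}
Z(t)=\sum_j B(e_j,t)e_j .
\end{equation*}
The processes $B(e_j,\cdot)$ are independent one-dimensional Brownian motions with variances $\gamma_j t$. Doob's maximal inequality gives $\mathbb{E}\sup_{s\le T}\|\sum_{j>N}B(e_j,s)e_j\|_H^2\le 4T\sum_{j>N}\gamma_j\to 0$. Hence the partial sums converge uniformly on compacts almost surely, along a subsequence and then by monotonicity of the tail, and $Z$ has a version in $C([0,\infty);H)$.

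We then define $\tilde B(f,t)=\langle Z(t),f\rangle_H$. For each $\omega$ off a single null set, the map $f\mapsto \tilde B(f,\cdot)$ is a bounded linear functional of $f$, by Cauchy–Schwarz: $|\tilde B(f,t)-\tilde B(g,t)|\le \|Z(t)\|_H\|f-g\|_H$. It is therefore continuous in $f$, jointly with $t$ on compacts. It remains to check that $\tilde B$ is a version of $B$. For fixed $f$, $\mathbb{E}|B(f,t)-\langle Z(t),f\rangle_H|^2=\sum_{j,k}\ldots=0$ by linearity and continuity in probability of $f\mapsto B(f,t)$ (Definition \ref{spacebm}), since $f=\sum_j\langle f,e_j\rangle e_j$ in $H$. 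So the finite-dimensional distributions agree.

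The main obstacle is interpretive rather than technical. Continuity in $f$ is a property of a version, since the original field is only defined through finite-dimensional distributions on $\mathbb{R}^H$. The key step is therefore to exhibit the single null set outside of which continuity holds for all $f$ simultaneously. The eigen-expansion does exactly this: the exceptional set is only where $\sum_j B(e_j,\cdot)^2$ fails to converge, and that set is countably specified.
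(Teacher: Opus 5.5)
Your proof is correct and follows the paper's route: a trace-class covariance yields a regular version via Theorem \ref{regthm} (your eigen-expansion $Z=\sum_j B(e_j,\cdot)e_j$ is the one used in its proof). Cauchy--Schwarz, $|\langle Z,f\rangle_H-\langle Z,g\rangle_H|\le\|Z\|_H\|f-g\|_H$, then gives continuity in $f$ off a single null set, which is all the paper's one-line argument asserts. Your Doob maximal-inequality step, which puts $Z$ in $C([0,\infty);H)$ and makes $f\mapsto B(f,\cdot)$ continuous into paths uniformly on compacts, and your explicit check that $\langle Z(t),f\rangle_H=B(f,t)$ a.s., go beyond what the paper verifies and are sound.
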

\begin{proof} Since $B$ is regular,  $f_n\to f$ in $H$ implies $\langle Z,f_n\rangle_H
\to\langle Z,f\rangle_H$ a.s. 
\end{proof}
Section \ref{contregsec} provides It\^o's \cite{ito}
approach to
creating continuous regular versions of the 
infinite dimensional Ornstein-Uhlenbeck processes
on a rigged Hilbert space (or Gelfand triple).  
Although different in details, this may be viewed
as a variant on the seminal work of \cite{grossberk}
for the construction of suitable spaces, referred
to as abstract Wiener spaces,  to accommodate
infinite dimensional processes.
\begin{remark}
As a matter of perspective, if one
considers Brownian motion then it is well-known
that Kolmogorov's product space is too large for
the measurability of events depending on uncountably
many time points, such as regularity of sample paths, while the space of absolutely continuous
functions is too small to support the distribution
of Brownian motion. The space of continuous functions
was first shown by Norbert Wiener to be the proper 
space, thus the name adopted by \cite{gross}; see
e.g., see \cite{stroockAWS} for a more detailed perspective.
\end{remark}

\subsection{Brownian UGRF}
In order to achieve some natural connections with It\^o's theory and/or
axioms of Euclidean field theory
it is required that the temporal evolution be extended from the positive half-line
to all of $\mathbb{R}$.  However this can be achieved quite naturally using
Kolmogorov extension theorem in the context
of UGRFs since the underlying Markov process $X=\{X(t):t\ge 0\}$ is a 
{\it stationary process}, i.e, temporally shift-invariant (translation invariant),
 under the invariant initial distribution $\pi$. Thus, $X$ may be
naturally extended to $\{\tilde{X}(t):t\in \mathbb{R}\}$ as follows:
For $t_1\le t_2\le\cdots\le t_m$, the finite dimensional distribution
of $(X(t_1),\dots,X(t_m))$ is defined by that of the time-translated
vector $(X(t_1-t_1),\dots,X(t_m-t_1)) = (X(0),\dots,X(t_m-t_1))$.
The double-sided extension will continue to be denoted $\tilde{X}$. 

\begin{thm}[Brownian Time Extension]\label{extendtime}
Let $\{B(f,t):f\in 1_\pi^\perp, t\in\mathbb{R}\}$ denote the
temporal extension of $B$ with finite dimensional
distributions defined by Gaussian weak-convergence limit 
\begin{equation}
\lim_{n\to\infty}\frac{1}{\sqrt{n}}\bigg(\int_{nt_1}^0f_1(\tilde{X}_{s_1})ds_1,\dots,\int_{nt_k}^0f_k(\tilde{X}_{s_k})ds_k,
 \int_{0}^{nt_{k+1}}f_{k+1}(\tilde{X}_{s_{k+1}})ds_{k+1},\dots,\int_0^{nt_m}f_m(\tilde{X}_{s_m})ds_m\bigg),
\end{equation}
for $t_1<t_2\cdots<t_k<0<t_{k+1}<\cdots< t_m$.
 Then ${B}$ is a space-time
centered 
Gaussian random field indexed by $1_\pi^\perp\times\mathbb{R}$
with 
 \begin{equation}
 \text{cov}(B(f_1,t_1),B(f_2,t_2))
 =
 \begin{cases} (|t_1|\wedge|t_2|)(-2\langle f_1, A^{-1}f_2\rangle_\pi)
 & \text{if}\quad t_1t_2 >0\\
 0 & \text{if}\quad t_1t_2\le 0.
 \end{cases}
 \end{equation}
 Moreover, 
 the path $t\to B(f,t)$ has an
 almost surely continuous version as  Brownian motion.
\end{thm}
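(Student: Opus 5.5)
The proof splits into three parts: the positive half-line, the negative half-line, and the cross-covariance between them.

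\emph{Positive times.} For $t>0$, Theorem \ref{spacetimecov} and Corollary \ref{kcylcor} already give the joint limit of $n^{-1/2}\int_0^{nt}f(\tilde X_s)ds$, with covariance $(t_1\wedge t_2)(-2\langle f_1,A^{-1}f_2\rangle_\pi)$.

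\emph{Negative times.} For $t<0$, the functional $n^{-1/2}\int_{nt}^0 f(\tilde X_s)ds$ has a stationary structure. Translating by $n|t|$, stationarity of $\tilde X$ gives it the law of an integral over $[0,n|t|]$ of the process, but read backward. So I would use the time-reversed process $\hat X(s)=\tilde X(-s)$, $s\ge 0$. By stationarity $\hat X$ is Markov with invariant law $\pi$, and its generator is $A^*$. Under RFA1 we have $A^*=A$, so $\hat X$ has the same law as $X$. Applying Theorem \ref{rabifcltthm} (with the Cram\'er--Wold remark) to $\hat X$ shows that the vector of negative-time functionals converges to a Brownian motion in $|t|$ with the same covariance, $(|t_1|\wedge|t_2|)(-2\langle f_1,A^{-1}f_2\rangle_\pi)$. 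Continuity of paths on each half-line, and hence of the concatenated path (both pieces vanish at $0$), follows from weak convergence in $C[0,\infty)$.

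\emph{Cross terms.} This is the main obstacle: showing that for $t_1<0<t_2$ the two pieces are asymptotically independent. By Cram\'er--Wold it suffices to treat linear combinations. The variance of $n^{-1/2}\bigl(a\int_{nt_1}^0 f_1+b\int_0^{nt_2}f_2\bigr)$ splits into the two diagonal variances plus a cross term
\[
\frac{2ab}{n}\int_{nt_1}^0\int_0^{nt_2}\langle f_1,T_{v-u}f_2\rangle_\pi\,dv\,du ,
\]
where $T$ is the semigroup. By the spectral gap in RFA1, $|\langle f_1,T_rf_2\rangle_\pi|\le e^{-\lambda r}\|f_1\|\|f_2\|$. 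Hence the double integral is bounded by $\int_0^\infty r e^{-\lambda r}dr<\infty$, so the cross term is $O(1/n)\to 0$.

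Convergence of variances alone does not give Gaussianity of the joint limit. For that I would use the martingale decomposition underlying \cite{RBfclt}: write $\int_a^b f(\tilde X)=g(\tilde X_b)-g(\tilde X_a)-(M_b-M_a)$ with $Ag=f$. This decomposition is valid on all of $\mathbb{R}$ because $\tilde X$ is a stationary Markov process. The sum over both intervals is then a single martingale over $[nt_1,nt_2]$ plus boundary terms that are $O_P(n^{-1/2})$. The martingale functional CLT applies, with quadratic variation increments whose covariance on disjoint intervals vanishes by independence of martingale increments. This gives a joint Gaussian limit with the stated block covariance: zero whenever $t_1t_2\le 0$, since the intervals $[nt_1,0]$ and $[0,nt_2]$ are disjoint. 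It also makes the cross-term computation above merely a consistency check. Kolmogorov consistency of these limiting finite-dimensional laws then defines $B$ on $1_\pi^\perp\times\mathbb{R}$.
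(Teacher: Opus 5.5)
Your argument is correct, but it takes a different route from the paper.

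\textbf{The paper's route.} The paper makes a single stationarity shift by $-nt_1$ that handles all times at once. The windows $[nt_j,0]$ and $[0,nt_j]$ become $[n(t_j-t_1),-nt_1]$ and $[-nt_1,n(t_j-t_1)]$. So the whole vector has the law of increments $\mathcal{I}_n(f_j,b_j)-\mathcal{I}_n(f_j,a_j)$ of the forward functionals at fixed nonnegative times. Theorem \ref{spacetimecov} plus continuous mapping then gives joint Gaussianity immediately, and the covariance is read off from independent increments of the forward limit:
\begin{itemize}
\item the negative-time field is $s\mapsto B(f,-t_1)-B(f,-t_1-s)$, a Brownian motion in $s=|t|$, obtained by reversing the Gaussian limit rather than the Markov process;
\item it is independent of the positive-time part because $[0,-t_1]$ and $[-t_1,t_j-t_1]$ are disjoint.
\end{itemize}
Thus the paper needs only stationarity of $\tilde X$ and the already-proved one-sided CLT. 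It uses no self-adjointness, no spectral gap, and no new martingale argument.

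\textbf{Your route.} You reverse $X$ itself, which uses $A=A^*$; RFA1 does supply this here. You then show the cross-covariance is $O(1/n)$ by exponential decay of correlations, and re-derive a two-sided version of Bhattacharya's martingale CLT to get joint Gaussianity. This buys an explicit mechanism and rate for the asymptotic independence of the two half-lines. It is heavier, though. Once the martingale step is done properly (ergodic theorem for the normalized brackets, Lindeberg via stationarity), it by itself yields the full covariance structure. That makes your first two steps and the spectral-gap bound redundant.

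\textbf{Two small corrections.}
\begin{itemize}
\item Translating $\int_{nt}^0$ by $n|t|$ gives an integral read forward, not backward. Reversal only matters for the joint law across several negative times.
\item Martingale increments over disjoint intervals are orthogonal, with vanishing cross-bracket, not independent. Independence only appears in the Gaussian limit.
\end{itemize}
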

\begin{proof}
To simplify the notation involved in the calculations,
let 
\begin{equation}I_a^b(j) = \int_a^bf_j(X_{s_j})ds_j,\quad 
\Delta_a^b(j) = B(f_j,b)-B(f_j,a).\end{equation}
Now, suppose $t_1<t_2\cdots<t_k<0<t_{k+1}<\cdots < t_m$.
For the double-sided extension of $X$, one makes  (integral) changes
 of variable of the form $u_j = s_j-nt_1$ to obtain 
 \begin{eqnarray}
 &&\frac{1}{\sqrt{n}}\bigg(I_{nt_1}^0(1),\dots,I_{nt_k}^0(k),
 I_{0}^{nt_{k+1}}(k+1),\dots,I_0^{nt_m}(m)\bigg)\nonumber\\
 &=&\frac{1}{\sqrt{n}}\bigg(\int_0^{-nt_1}(f_1(\tilde{X}_{u_1+nt_1})du_1,\dots,\int_{n(t_k-t_1)}^{-nt_1}f_k(\tilde{X}_{u_k+nt_1})du_k,\nonumber\nonumber\\
 &&\int_{-{nt_1}}^{n(t_{k+1}-t_1)}f_{k+1}(\tilde{X}_{u_{k+1}+nt_1})du_{k+1},\dots,\int_{-nt_1}^{n(t_m-t_1)}f_m(\tilde{X}_{u_m+nt_1})du_m\bigg)\nonumber\\
&=&\frac{1}{\sqrt{n}}\bigg(I_0^{-nt_1}(1),\dots,I_{n(t_k-t_1)}^{-nt_1}(k),
 I_{-{nt_1}}^{n(t_{k+1}-t_1)}(k+1),\dots,I_{-nt_1}^{n(t_m-t_1)}(m)\bigg)\nonumber\\
&=&\frac{1}{\sqrt{n}}\bigg(I_0^{-nt_1}(1),\dots,
I_0^{-nt_1}(k)
-I_0^{n(t_k-t_1)}(k),
 I_{0}^{n(t_{k+1}-t_1)}(k+1)\nonumber\\
&&-I_0^{-nt_1}(k+1),\dots,
I_{0}^{n(t_m-t_1)}(m)
-I_{0}^{-nt_1}(m)\bigg)\nonumber\\
&&\Rightarrow \bigg(\Delta_0^{-t_1}(1),\dots,
\Delta^{-t_1}_{t_k-t_1}(k),
\Delta^{t_{k+1}-t_1}_{-t_1}(k+1),\dots,
\Delta^{t_m-t_1}_{-t_1}(m)\bigg)\nonumber\\
&=&\bigg(B(f_1,t_1),B(f_2,t_2),\dots,B(f_m,t_m)\bigg). 
 \end{eqnarray}
 That is, the limiting joint finite dimensional
  distribution of $(B(f_1,t_1),\dots,B(f_m,t_m))$ is the Gaussian distribution of 
  $$(\Delta_0^{-t_1}(1),\Delta^{-t_1}_{t_2-t_1}(2),\dots,
  \Delta^{t_m-t_1}_{-t_1}(m))$$
  previously defined for $-t_1\ge 0, t_j-t_1\ge 0, j=2,\dots,m$. 
Thus, computing the expected values
 $\mathbb{E}\Delta^{-t_1}_0(1)\Delta^{-t_1}_{t_j-t_1}(j)$, for $j\le k$, 
 and  $\mathbb{E}\Delta^{-t_1}_0\Delta^{t_j-t_1}_{-t_1}(j)$,
 for  $j\ge k+1$,
 one arrives at 
  \begin{equation}\label{covform1}
\text{cov}(B(f_1,t_1),B(f_j,t_j))
  =\begin{cases} |t_j|
  (-2\langle f_1, A^{-1}f_j\rangle_\pi)\quad j\le k\\
 0\quad k+1\le j\le m.
   \end{cases}
  \end{equation}
Similarly, for $i\ge 2$, computing
 $\mathbb{E}\Delta^{-t_1)}_{t_i-t_1}(i)\Delta^{-t_1}_{t_j-t_1}(j)$, 
 for $2\le i<j\le k$,
 $\mathbb{E}\Delta^{-t_1}_{t_i-t_1}(i)\Delta^{t_j-t_1}_{-t_1}(j)$, 
 for $2\le i\le k<j$, and
 $\mathbb{E}\Delta^{t_i-t_1}_{-t_1)}(i)\Delta^{t_j-t_1}_{-t_1}(j)$, 
 for $k<i<j\le m$,
 one obtains
\begin{equation}\label{covform2}
  \text{cov}(B(f_i,t_i),B(f_j,t_j))
  =
 \begin{cases}|t_j|
  (-2\langle f_1, A^{-1}f_j\rangle_\pi),\  2\le i<j\le k\\
0,\  2\le i <k+1\le j\le m\\
 (t_i\wedge t_j)
  (-2\langle f_1, A^{-1}f_j\rangle_\pi),\ k < i\le j\le m.
  \end{cases}
    \end{equation}
 \end{proof}
Observing that $B(f,0)=0$, it follows
 that $B(f,-t) = -\{B(f,0)-B(f,-t)\}$ and 
 $B(f,t)= \{B(f,t)-B(f,0)\}$ are increments, i.e.,
the forms of (\ref{covform1}),(\ref{covform2}) reveal independent 
 increments.

\subsection{Stationary UGRF}

The stationary Ornstein-Uhlenbeck process is a
familiar choice of the Gaussian free field as a 
ground state, e.g., 
see \cite{newman}. 
In this subsection we
 construct a stationary UGRF from the class of 
infinite dimensional Ornstein-Uhlenbeck processes. 
To define a Stationary UGRF,  let us consider 
a Lamperti transformation of the Brownian UGRF $B$
under the assumptions of {\bf RFA1}.
Let
\begin{equation}
c(f) :=
2\langle f,-A^{-1}f\rangle_\pi\equiv
2||(-A)^{-\frac{1}{2}}f|^2_\pi, \ f\in1_\pi^\perp=
\mathcal{D}((-A)^{-\frac{1}{2}}).
\end{equation}
 \begin{thm}\label{statugrfthm}
 Let $\gamma<0$ and define
\begin{equation}Z(f,t) = 
 e^{-|\gamma|t}\frac{1}{\sqrt{2|\gamma|}}B(f,e^{2|\gamma|t})\equiv  
 e^{-|\gamma|t}\sqrt{\frac{c(f)}{2|\gamma|}}W(f,e^{2|\gamma|t}),
 \quad t\in\mathbb{R}, f\in
 1_\pi^\perp.\end{equation}
 Then, $t\to Z(\cdot,t)$ is a stationary random field such
 that for fixed $f_1,\dots,f_m\in1_\pi^\perp$,  one has that
 $\{Z(f_1,t),\dots,Z(f_m,t)):t\in\mathbb{R}\}$ is a stationary
 multivariate Ornstein-Uhlenbeck process with $\Gamma = \gamma{\mathbf I}$,
 and $C=-2A^{-1}$.
\end{thm}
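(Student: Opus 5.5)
The plan is to reduce everything to a covariance computation, since both $B$ and $Z$ are centered Gaussian. By Theorem \ref{extendtime} (restricted to positive times) and Corollary \ref{kcylcor}, $B$ is a space-time generalized Brownian motion on $H=1_\pi^\perp$ with spatial covariance $C=-2A^{-1}$. Thus for $s,t\ge 0$ and $f_1,f_2\in 1_\pi^\perp$,
\[
\mathbb{E}B(f_1,s)B(f_2,t)=(s\wedge t)\langle f_1,Cf_2\rangle_\pi .
\]
Since $e^{2|\gamma|t}>0$ for every $t\in\mathbb{R}$, only the positive-time part of $B$ enters the definition of $Z$. Consequently $\{Z(f_i,t_j)\}$ is jointly centered Gaussian, being a deterministic linear image of a Gaussian family. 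Linearity of $f\mapsto Z(f,t)$ and continuity in probability are inherited from $B$.

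The key computation is the covariance. For $s,t\in\mathbb{R}$,
\[
\mathbb{E}Z(f_1,s)Z(f_2,t)=\frac{e^{-|\gamma|(s+t)}}{2|\gamma|}\,\bigl(e^{2|\gamma|s}\wedge e^{2|\gamma|t}\bigr)\langle f_1,Cf_2\rangle_\pi
=\frac{e^{-|\gamma||t-s|}}{2|\gamma|}\langle f_1,Cf_2\rangle_\pi .
\]
This depends only on $t-s$. Stationarity of $t\mapsto Z(\cdot,t)$ as a random field then follows, because Gaussian finite-dimensional distributions are determined by their covariances. For fixed $f_1,\dots,f_m$, the vector process $(Z(f_1,t),\dots,Z(f_m,t))$ is a centered stationary Gaussian process with matrix covariance $e^{\gamma|t-s|}Q$, where
\[
Q=-\tfrac{1}{2}\gamma^{-1}\,\bigl(\langle f_i,Cf_j\rangle_\pi\bigr)_{ij}.
\]
This is exactly the equilibrium covariance $Q=-\frac12\Gamma^{-1}C$ from (\ref{oupi}) with $\Gamma=\gamma\mathbf{I}$. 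Detailed balance $\Gamma C=C\Gamma^*$ holds trivially since $\Gamma$ is scalar.

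To identify the process as the stationary OU solution of (\ref{ousde}), I would recall that the stationary solution of $dZ=\gamma Z\,dt+\sqrt{C}\,dW$ with Gaussian invariant law has covariance
\[
\mathbb{E}Z(s)Z(t)^{T}=e^{\gamma|t-s|}Q,\qquad Q=\frac{C}{2|\gamma|}.
\]
Matching Gaussian finite-dimensional distributions then gives the identification. Alternatively, one can verify it directly via Itô's formula. Write $Z(f,t)=e^{-|\gamma|t}(2|\gamma|)^{-1/2}B(f,u(t))$ with $u(t)=e^{2|\gamma|t}$. Then
\[
dZ=-|\gamma|Z\,dt+e^{-|\gamma|t}(2|\gamma|)^{-1/2}\,dB(f,u(t)),
\]
and the time-changed martingale term has quadratic covariation $e^{-2|\gamma|t}(2|\gamma|)^{-1}\cdot 2|\gamma|e^{2|\gamma|t}\langle f_i,Cf_j\rangle\,dt=\langle f_i,Cf_j\rangle\,dt$. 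By Lévy's characterization this is $\sqrt{C}\,dW$ for an $m$-dimensional Brownian motion. Continuity of paths comes from the continuous version of $B$ in Theorem \ref{extendtime}.

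The only delicate point I expect is bookkeeping rather than analysis. The first expression in the definition of $Z$ carries the factor $1/\sqrt{2|\gamma|}$ with $B$ already having covariance $C$; the second rewrites this via $B(f,\cdot)=\sqrt{c(f)}W(f,\cdot)$ with $W(f,\cdot)$ a standard Brownian motion. This rewriting is consistent for each single $f$, and cross-covariances must be computed from the $B$-form. I would check that the normalization yields $Q=C/(2|\gamma|)$, consistent with (\ref{oupi}) and with the scalar Lamperti formula.
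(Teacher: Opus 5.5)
Your proof is correct, but it takes a different route from the paper's. The paper factors the finite family through a standard Brownian motion, writing $(B(f_1,t),\dots,B(f_m,t))^{\prime}=q^{1/2}W_m(t)$ with $q=((-2\langle f_i,A^{-1}f_j\rangle_\pi))$. It then applies the scalar Lamperti transformation from the introduction to each coordinate of $W_m$, obtaining a stationary OU process $Z_m$, and pushes $Z_m$ forward by the constant matrix $q^{1/2}$. This works because the time change $t\mapsto e^{2|\gamma|t}$ and the prefactor $e^{-|\gamma|t}/\sqrt{2|\gamma|}$ are scalar and commute with $q^{1/2}$, which is exactly where $\Gamma=\gamma\mathbf{I}$ enters. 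You instead compute $\mathbb{E}Z(f_1,s)Z(f_2,t)=\frac{e^{-|\gamma||t-s|}}{2|\gamma|}\langle f_1,Cf_2\rangle_\pi$ directly and match it with the stationary OU covariance $e^{\gamma|t-s|}Q$, $Q=-\frac12\Gamma^{-1}C$. Alternatively, you derive the SDE via It\^o's formula and L\'evy's characterization. The paper's argument is shorter once the one-dimensional Lamperti fact is granted, but it leaves stationarity and the OU identification implicit in that fact. Your covariance route needs no factorization of $q$ and proves stationarity for the whole field at once, for any finite set of test functions and times. It also makes the equilibrium covariance $Q=C/(2|\gamma|)$ explicit. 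Your It\^o--L\'evy variant goes further: it exhibits the noise $\sqrt{C}\,dW$ driving the SDE, which anticipates the innovation-process identification in part (3) of Theorem~\ref{itolimit}.
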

\begin{proof}
For fixed $f_1,\dots,f_m\in1_\pi^\perp$, $\{(B(f_1,t),\dots,B(f_m,t)):t\in\mathbb{R}\}$ is a multivariate mean zero Brownian motion. Write 
\begin{equation}\label{scalebm}
(B(f_1,t_1),\dots,B(f_m,t_m))^{\prime} 
= q^{\frac{1}{2}}(f_1,\dots,f_m)W_m(t),
\end{equation}
where $W_m(\cdot)$ is $m$-dimensional standard Brownian motion,
and 
\begin{equation}\label{selfadjcov}
((c(f_i,f_j)))_{1\le i,j\le m} = ((-2\langle f_i,A^{-1}f_j\rangle))_{1\le i,j\le m}
\end{equation}
is the covariance matrix of $\{(B(f_1,t),\dots,B(f_m,t)):t\in\mathbb{R}\}$.
Then, using the Lamperti transformation, since each component 
of $W_m(\cdot)$ is a one-dimensional standard Brownian motion,
$Z_m(s) = e^{-|\gamma|s}\frac{1}{\sqrt{2|\gamma|}}W_m(e^{2|\gamma|s}), s\in\mathbb{R},$ is a two-sided
stationary Ornstein-Uhlenbeck process. The linear transformation
in (\ref{scalebm}) makes 
$\{e^{-|\gamma|s}\frac{1}{\sqrt{2|\gamma|}}(B(f_1,e^{2|\gamma|s}),\dots,B(f_m,e^{2|\gamma|s})):s\in\mathbb{R}\}
$ the family of
finite-dimensional distribution of stationary Ornstein-Uhlenbeck 
random field as asserted. 
\end{proof}
\begin{thm}{(An OU$\mathcal{L}(1_\pi^\perp)$-Process Limit)}
\label{itolimit}
Assume that $A$ satisifies {\bf RFA1}.
\begin{enumerate}
\item{} Then, the stationary UGRF $Z$ 
 in Theorem \ref{statugrfthm} is a version of the unique
OU$\mathcal{L}(1_\pi^\perp)$-process
whose It\^o {\it characteristics}, see \cite{ito},
 are given by
 (i) $p(f) = \sqrt{ \frac{1}{|\gamma|}
\langle f,(-A)^{-1}f\rangle_\pi},\  f\in1_\pi^\perp$,
and (ii) the characteristic (dissipative) operator $\Gamma = -|\gamma|{\mathbf I}$.
\item{} It\^o's deterministic component 
$Z^{(d)}$ is zero. 
\item{} It\^o's innovative process corresponding
to $Z$ is given by the Brownian time extension
$B$ defined in the statement of 
Theorem \ref{statugrfthm}.
\item{} Defining
\begin{equation}
U_n(f,t) = \frac{1}{\sqrt{2|\gamma|}}
\frac{e^{-|\gamma|t}}{\sqrt{n}}I_n(f,e^{2|\gamma|t}),\quad f\in 1_\pi^\perp, t\ge 0,
n\ge 1, 
\end{equation}
for arbitrary Lamperti parameter $\gamma<0$,
 the sequence of random fields
 $\{U_n\}_{n=1}^\infty$ converges under $P_\pi$ in
  finite dimensional distribution to 
It\^o's infinite dimensional Ornstein-Uhlenbeck 
$\mathcal{L}(1^\perp_\pi)$-process with characteristic
(dissipative) operator
$\Gamma =\gamma{\mathbf I}$, and $C=-2A^{-1}$. 
\end{enumerate}
\end{thm}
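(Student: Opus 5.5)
The plan is to reduce everything to It\^o's characterization of OU$\mathcal{L}(E)$-processes in \cite{ito}. There, a stationary Gaussian $\mathcal{L}(E)$-process is determined up to equivalence of finite dimensional distributions by two pieces of data. The first is its characteristic semi-norm $p$, the square root of the quadratic form of the driving noise. The second is the dissipative characteristic operator $\Gamma$. So the first step is to compute the covariance of $Z$ explicitly and match it with It\^o's stationary OU covariance. By Theorem \ref{statugrfthm} and Theorem \ref{extendtime}, for $s\le t$ one has $e^{2|\gamma|s}\le e^{2|\gamma|t}$ and
\begin{equation*}
\mathbb{E}Z(f_1,s)Z(f_2,t)=\frac{e^{-|\gamma|(s+t)}}{2|\gamma|}e^{2|\gamma|s}\langle f_1,-2A^{-1}f_2\rangle_\pi
=e^{-|\gamma||t-s|}\frac{1}{|\gamma|}\langle f_1,(-A)^{-1}f_2\rangle_\pi .
\end{equation*}
This is exactly It\^o's stationary covariance $\langle f_1, e^{(t-s)\Gamma^*}Vf_2\rangle$ with $\Gamma=-|\gamma|\mathbf{I}$ and equilibrium form $V=|\gamma|^{-1}(-A)^{-1}$. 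Reading off $p(f)^2=\langle f,Vf\rangle_\pi$ gives (i). For (ii), boundedness of $A^{-1}$ under \textbf{RFA1} makes $p$ continuous on $1_\pi^\perp$, and $\Gamma$ trivially generates the contraction semigroup $e^{-|\gamma|t}\mathbf{I}$. By It\^o's uniqueness theorem, $Z$ is then a version of the unique OU$\mathcal{L}(1_\pi^\perp)$-process with these characteristics. This proves item 1.

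For items 2 and 3, I would use It\^o's decomposition $Z=Z^{(d)}+Z^{(i)}$. Here $Z^{(d)}$ is the component measurable with respect to the remote past $\bigcap_t\sigma\{Z(\cdot,s):s\le t\}$, and the innovation $\mathcal{L}$-valued Brownian motion $B^Z$ is recovered from
\begin{equation*}
B^Z(f,t)-B^Z(f,s)=Z(f,t)-Z(f,s)+|\gamma|\int_s^t Z(f,u)\,du .
\end{equation*}
To show $Z^{(d)}=0$, note that for each fixed $f$ the scalar process $Z(f,\cdot)$ is a one-dimensional stationary OU process. Its covariance decays exponentially as $|t-s|\to\infty$, so it is purely nondeterministic, and its remote-past $\sigma$-field is trivial (Kolmogorov $0$--$1$ type, or Wold decomposition). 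The same holds jointly for finitely many $f$'s, because Gaussian projections onto the remote past vanish when $\mathrm{cov}\to0$. For item 3, I would apply It\^o's formula to the Lamperti representation $Z(f,t)=e^{-|\gamma|t}(2|\gamma|)^{-1/2}B(f,e^{2|\gamma|t})$. This gives $dZ=-|\gamma|Z\,dt+\sqrt{2|\gamma|}\,e^{|\gamma|t}(2|\gamma|)^{-1}dB(f,e^{2|\gamma|t})$. The martingale term is a time-changed, rescaled Brownian increment whose quadratic variation matches $C$. This identifies the innovation with the (time-reparametrized) Brownian UGRF of Theorem \ref{extendtime}, in the sense stated. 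The main obstacle here is bookkeeping: reconciling It\^o's normalization of the innovation and of $p$ with the clock $e^{2|\gamma|t}$.

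For item 4, I would fix $t_1,\dots,t_m$ and $f_1,\dots,f_m\in1_\pi^\perp$. The vector $(U_n(f_j,t_j))_j$ equals the deterministic scalars $(2|\gamma|)^{-1/2}e^{-|\gamma|t_j}$ times the vector $(n^{-1/2}I_n(f_j,e^{2|\gamma|t_j}))_j$, with the normalization as in Theorem \ref{rabifcltthm}. That vector converges in distribution by the second bullet of Theorem \ref{spacetimecov}, extended from $\mathcal{R}(A)$ to $1_\pi^\perp$ via Corollary \ref{equiv} and \textbf{RFA1}. The continuous mapping theorem then gives convergence to $(Z(f_j,t_j))_j$, which by item 1 is It\^o's process. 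The hardest genuine point is item 2: the remote-past triviality must hold simultaneously over the infinite-dimensional index set. I would handle it by noting that It\^o's $Z^{(d)}(f)$ is defined through $L^2$-projections, one $f$ at a time. The scalar argument therefore suffices, since $\mathbb{E}[Z(f,t)\mid\text{remote past}]=\lim_{s\to-\infty}e^{-|\gamma|(t-s)}Z(f,s)=0$ in $L^2$.
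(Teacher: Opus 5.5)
Your proposal is correct in substance and follows the paper's overall plan:
\begin{itemize}
\item identify $(p,\Gamma)$ and invoke It\^o's uniqueness theorem for item 1;
\item show the deterministic part vanishes because $p(e^{-|\gamma|u}f)=e^{-|\gamma|u}p(f)\to0$;
\item identify the innovation through its covariance $C=-2A^{-1}$;
\item obtain item 4 from Theorem \ref{spacetimecov}.
\end{itemize}
In item 4, read $U_n$ with a single factor $n^{-1/2}$, as both you and the paper implicitly do; taken literally, $n^{-1/2}I_n$ carries $n^{-1}$ and tends to $0$.

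Two steps are routed differently. In item 1 the paper verifies It\^o's axioms (Markov via the independent increments of $B$) and extracts $\Gamma$ from the prediction formula $\mathbb{E}[Z(f,t)\mid\sigma(Z(h,s):s\le0)]=Z(e^{-|\gamma|t}f,0)$. You read $p$ and $\Gamma$ off the two-time covariance instead. For centered Gaussian fields this is equivalent and slightly cleaner: equal covariances give equal finite dimensional distributions, so the Markov property is inherited.

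In item 3 the paper uses no stochastic calculus. It applies It\^o's identity $\tilde b^2(f)=-2p(\Gamma f,f)=2|\gamma|p^2(f)=-2\langle f,A^{-1}f\rangle_\pi$ for the innovation's characteristic and matches it with $b^2(f)=\mathbb{E}B(f,1)^2$. Your route yields more: an explicit Wiener-integral representation of the innovation against the time-changed $B$. This also makes plain that ``given by $B$'' means equality in law, not pathwise. Correct the coefficient, though. The martingale part of $d\,[e^{-|\gamma|t}(2|\gamma|)^{-1/2}B(f,e^{2|\gamma|t})]$ is $(2|\gamma|)^{-1/2}e^{-|\gamma|t}\,dB(f,e^{2|\gamma|t})$, with quadratic variation $(2|\gamma|)^{-1}e^{-2|\gamma|t}c(f)\,2|\gamma|e^{2|\gamma|t}\,dt=c(f)\,dt$. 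The factor $e^{+|\gamma|t}$ you wrote would give $e^{4|\gamma|t}c(f)\,dt$, which does not match $C$.

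In item 2, drop two heuristics: that exponential decay of the covariance forces pure nondeterminism, and that Gaussian projections onto the remote past vanish once covariances tend to $0$. Both fail for general stationary Gaussian processes. Spectral density $\mathrm{sech}\,\lambda$ gives the exponentially decaying covariance $\pi\,\mathrm{sech}(\pi t/2)$, yet $\int_{\mathbb{R}}\frac{\log\mathrm{sech}\,\lambda}{1+\lambda^2}\,d\lambda=-\infty$, so by Krein's criterion the process is deterministic.

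What actually proves $Z^{(d)}=0$ is your final sentence, which rests on three facts:
\begin{itemize}
\item the Markov prediction identity $\mathbb{E}[Z(f,t)\mid\mathcal{F}_s]=Z(e^{-|\gamma|(t-s)}f,s)$ for $\mathcal{F}_s=\sigma(Z(h,u):u\le s,\ h\in1_\pi^\perp)$, which follows from your covariance formula by Gaussian orthogonality;
\item reverse martingale convergence as $s\to-\infty$;
\item $\|Z(e^{-|\gamma|(t-s)}f,s)\|_{L^2}=e^{-|\gamma|(t-s)}p(f)\to0$.
\end{itemize}
This is exactly the paper's computation $p^{(d)}(f,g)=\lim_{u\to\infty}e^{-2|\gamma|u}p(f,g)=0$ via It\^o's Proposition 6.2.
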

\begin{proof}
For (1), it is straightforward to check that the stationary
UGRF satisfies the requirements of 
(\cite{ito}, Definition 2.3). Namely, $Z$ is a centered,
Guassian, Markov, stationary, mean continuous 
$\mathcal{L}(1_\pi^\perp)$-process. 
For example, to check that $Z$ is temporally Markov simply
recall that $B$ has independent temporal increments
and is therefore temporally Markov. The Markov property
is clearly preserved under the Lamperti transformation.
The remaining properties required by It\^o's 
theory are
obvious since $\Gamma =-|\gamma|{\mathbf I}$ is a bounded operator.
For 1(i) one has 
\begin{equation}
p^2(f) = \mathbb{E}Z^2(f,t) = \frac{1}{2|\gamma|}
e^{-2|\gamma|t}e^{2|\gamma|t}2\langle f,(-A)^{-1}f\rangle_\pi
=  \frac{1}{|\gamma|}
\langle f,(-A)^{-1}f\rangle_\pi,
\end{equation}
and for 1(ii)
\begin{equation}
\mathbb{E}(Z(f,t)\vert\sigma(Z(h,s), s\le 0,h\in1_\pi^\perp))
= Z(g(f,t),0)
\end{equation}
so that 
\begin{equation}
\frac{e^{-|\gamma|}}{2|\gamma|}B(f,1)
=\frac{1}{2|\gamma|}B(g(f,t),1).
\end{equation}
Thus, by linearity of $f\to B(f,1)$, one has
$e^{-|\gamma|t}f=g(f,t)$, i.e., 
$S(t)f = e^{-\Gamma t}f, t\ge 0$ with infinitesimal
generator $\Gamma f= -|\gamma|f$.
In view of (\cite{ito}, Theorem 9.1) this
uniquely determines the stationary UGRF
$Z$ as It\^o's  infinite dimensional $OU\mathcal{L}(1_\pi^\perp)$-process in the sense
of equivalent finite dimensional distributions.
For the proof of (2), appeal to 
(\cite{ito}, Proposition 6.2) to see that
\begin{equation}
p^{(d)}(f,g)=\lim_{u\to\infty}p(e^{-|\gamma|u}f,e^{-|\gamma|u}g) =\lim_{u\to\infty}e^{-2|\gamma|u}p(f,g)=0.
\end{equation}
Thus the deterministic part of $Z$ vanishes. For the proof
of (3), denote the innovative process for $Z$ by
$\tilde{B}$ and its characteristic seminorm 
by $\tilde{b}$.  First note that
\begin{equation}
p(f,g)=\frac{p^2(f)+p^2(g)-p^2(f-g)}{2}
= \frac{\langle f,(-A)^{-1}g\rangle_\pi}{|\gamma|}.
\end{equation}
Thus,
\begin{equation}
\tilde{b}^2(f) = -2p(\Gamma f,f) = 2|\gamma|p^2(f)
= 2|\gamma|\frac{\langle f,(-A)^{-1}f\rangle_\pi}{2|\gamma|}
= 2\langle f,(-A)^{-1}f\rangle_\pi.
\end{equation}
On the other hand the characteristic seminorm $b$ of 
the Brownian UGRF $B$ is given
\begin{equation}
b^2(f) = \mathbb{E}(B(f,1)-B(f,0))^2 = 
-2\langle f,A^{-1}f\rangle_\pi.
\end{equation}
Thus $B$ is the innovation process associated with
$Z$.  
Finally for (4), the asserted convergence follows from Theorem \ref{spacetimecov}.
\end{proof}

\section{Continuously Regular Versions of  the Brownian UGRF $B$ and the Stationary UGRF $Z$ on a Rigged
Hilbert Space}
\label{contregsec}
In this section the It\^o continuous regularity theory is applied to the 
UGRFs $B$ and $Z$. For this purpose the It\^o theory
\cite{ito} will be reviewed under the more
specialized assumptions and notations  of the present paper. 

Let $D = \mathcal{D}(\Gamma)=1_\pi^\perp$. 
Then, one has
\begin{equation}
p(f)  :=\sqrt{\mathbb{E}Z^2(f,t)}= \sqrt{\frac{\langle f,-A^{-1}f\rangle_\pi}{|\gamma|}},\  f\in1_\pi^\perp.
\end{equation}
The polarization identity
\begin{equation}\label{pfgform}
p(f,g) = \frac{p^2(f)+p^2(g) - p^2(f-g)}{2}
= \frac{\langle f,(-A)^{-1}g\rangle_\pi}{|\gamma|}.
\end{equation}
induces a norm
on $D$ determined by 
\begin{equation}
r(f,g) = p((I-\Gamma)f,(I-\Gamma)g) 
= \frac{(1+|\gamma|)^2}{|\gamma|}\langle f,(-A)^{-1}g\rangle_\pi, \quad f,g\in 1^\perp_\pi, 
\end{equation}
such that $(D,r)$ is a separable Hilbert space and
$\mathcal{D}(\Gamma^2)$ is a dense subspace; 
see (\cite{ito}, Section 10).

Note that
$r(f) = (1+|\gamma|)p(f)$.
Let $\{d_n\}$ be an orthonormal basis of $(D,r)$.
such that $d_n\in\mathcal{D}(\Gamma^2) = 1_\pi^\perp$
for each $n$. Let $\{\lambda_n\}$ be sequence of 
positive numbers such that 
\begin{equation}
|\lambda|^2 = \sum_n\lambda_n^2 <\infty.
\end{equation}
Then the operator $K$ defined by its tensor representation
\begin{equation} K = \sum_n\lambda_n d_n\otimes d_n
\end{equation}
is a strictly positive definite Hilbert-Schmidt operator on $(D=1_\pi^\perp,r)$, with orthonormal eigenvector and
eigenvalue pairs $(d_n,\lambda_n), n\ge 1$, i.e., 
\begin{equation}
Kf = \sum_n\lambda_n\langle d_n,f\rangle_rd_n,
\quad f\in D. 
\end{equation} Define a separable Hilbert
space $(D_K,r_K)$ as follows:
\begin{eqnarray}
D_K&=&KD\\
r_K(f) &=& r(K^{-1}f)=\frac{1+|\gamma|}{\sqrt{|\gamma|}}\sqrt{\sum_n\frac{\langle f,(-A)^{-1}d_n\rangle^2_\pi}{\lambda_n^2}}\\
 r_K(f,g) &=& r(K^{-1}f,K^{-1}g) =  
 \frac{(1+|\gamma|)^2}{|\gamma|}\sum_n\frac{\langle f,(-A)^{-1}d_n\rangle_\pi\langle g,(-A)^{-1}d_n\rangle_\pi}{\lambda_n^2}.
\end{eqnarray}
Then 
\begin{equation}
r(f) \le cr_K(f), \quad c = \sup_n\lambda_n <\infty.
\end{equation}
The sequence $\{e_n = \lambda_nd_n\}$ is an 
orthonormal basis for $(D_K,r_K)$. Now, define
$(H,||\cdot||)$ to be the dual space of 
bounded linear functionals on $(D_K,r_K)$.
The triple $\{(D_K,r_K),(D,r),(H,||\cdot||)\}$ is a
{\it Gelfand triple}, or 
 {\it rigged Hilbert space} construction.
 The orthonormal
basis of $(H,||\cdot||)$ dual to $\{e_n\}$ is denoted
by $\{e_n^\prime\}$. 
\begin{equation}
\langle e_m^\prime,e_n\rangle = \delta_{m,n}.
\end{equation}

The following definition is given in \cite{ito} based on
this modification of the Hilbert space. 
\begin{defin}[It\^o's Continuous Regularity]\label{contregdef}
 A family $\{Y(t):t\in(-\infty,\infty)\}$
 of  $H$-valued random variables is referred to
as an $H$-process. 
An $H$-process $Y$ is called
sample continuous if, for each $\omega$,
 $t\to Y(t,\omega)$ is continuous 
for the norm of $H$.  A sample continuous process
$Y^\prime$ is called a version of $Y$ if $Y(t)=Y^\prime(t)$
a.s. for every  $t$. Let $Y$ be an $\mathcal{L}(D)$-process
and $\tilde{Y}(t)$ a regular version of $Y(t)$ for each $t$.
A continuous version of $\tilde{Y}$, if it exists, is called
a continuous regular version of $Y$.
An $H$-process $Y$ is called a centered Gaussian
process if $\{\langle Y(t)|f\rangle: f\in D_K, t\in(-\infty,\infty)\}$
form a centered Gaussian collection of random variables,
where  $\langle y|f\rangle$ denotes the
value of $y\in H$ at $f\in D_K$.
\end{defin}
The property of being a centered Gaussian process is
preserved by taking versions. 
The following is a corollary to Theorem \ref{statugrfthm}
using It\^o's theory for $\mathcal{L}(D)$-processes. 
\begin{cor}\label{contregcor}
As $\mathcal{L}(D)$ processes,
the Brownian time UGRF $B$  and the
stationary UGRF $Z$ possess continuous regular 
versions.
\end{cor}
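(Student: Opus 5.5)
The plan is to invoke It\^o's continuous regularity theorem for $\mathcal{L}(D)$-processes from \cite{ito}. For a centered Gaussian, mean continuous $\mathcal{L}(D)$-process whose characteristic seminorms are continuous for $(D,r)$, the theorem gives a continuous regular version with values in the dual $H$ of $(D_K,r_K)$, once $K$ is Hilbert--Schmidt on $(D,r)$. So the work is to verify these hypotheses for $Z$ and $B$ in the present setting.

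\emph{The stationary UGRF $Z$.} By Theorem \ref{itolimit}(1), $Z$ is a version of It\^o's OU$\mathcal{L}(1_\pi^\perp)$-process with characteristics
\[
p(f)=\sqrt{\langle f,(-A)^{-1}f\rangle_\pi/|\gamma|}, \qquad \Gamma=-|\gamma|\mathbf{I}.
\]
By Theorem \ref{itolimit}(2) its deterministic component vanishes, and by Theorem \ref{itolimit}(3) its innovation process is $B$. Under {\bf RFA1}, $(-A)^{-1}$ is bounded on $1_\pi^\perp$, so $p$ is continuous on $1_\pi^\perp$. Moreover $r(f)=(1+|\gamma|)p(f)$ with $\mathcal{D}(\Gamma^2)=D$, so the norm $r$ of \cite{ito}, Section 10, is exactly the one built above.

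Since $\sum_n\lambda_n^2<\infty$, $K$ is Hilbert--Schmidt on $(D,r)$, and $r\le c\,r_K$. Then
\[
\mathbb{E}\|\tilde Z(t)\|_H^2=\sum_n \mathbb{E}\,Z(e_n,t)^2=\sum_n p(e_n)^2=\sum_n \lambda_n^2\, p(d_n)^2=(1+|\gamma|)^{-2}\sum_n\lambda_n^2<\infty,
\]
so by the argument of Theorem \ref{regthm} each $Z(\cdot,t)$ has a regular $H$-valued version $\tilde Z(t)=\sum_n Z(e_n,t)e_n'$.

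For sample continuity, I would apply a Kolmogorov--\v{C}entsov criterion in $H$. By stationarity,
\[
\mathbb{E}\|\tilde Z(t)-\tilde Z(s)\|_H^2=\sum_n 2\lambda_n^2\bigl(p(d_n)^2-\mathrm{cov}(Z(d_n,t),Z(d_n,s))\bigr)=\sum_n2\lambda_n^2p(d_n)^2\bigl(1-e^{-|\gamma||t-s|}\bigr)\le C|t-s|.
\]
This follows from the covariance $e^{-|\gamma||t-s|}p^2(f)$ coming from Theorem \ref{statugrfthm}. Because $\tilde Z(t)-\tilde Z(s)$ is Gaussian in $H$, its fourth moment is bounded by a constant times the square of its second moment, so
\[
\mathbb{E}\|\tilde Z(t)-\tilde Z(s)\|_H^4\le C'|t-s|^2.
\]
Kolmogorov's criterion then gives a continuous version on each compact interval, and hence on $\mathbb{R}$. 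This is precisely It\^o's continuous regularity theorem, which I would cite directly.

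\emph{The Brownian UGRF $B$.} Here I would use the inverse Lamperti relation
\[
B(f,u)=\sqrt{2|\gamma|}\,u^{1/2}Z\bigl(f,\log u/(2|\gamma|)\bigr), \qquad u>0.
\]
The analogous relation for $u<0$ uses the independent negative-time half, which is again a Brownian motion by Theorem \ref{extendtime}. Continuity of the time change on $(0,\infty)$ transfers the continuous regular version from $Z$. At $u=0$, the bound
\[
\mathbb{E}\|\tilde B(u)\|_H^2=|u|\sum_n c(e_n)=O(|u|)
\]
together with the same Kolmogorov estimate, $\mathbb{E}\|\tilde B(u)-\tilde B(v)\|^4\le C|u-v|^2$, gives continuity across $0$. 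Alternatively, one can apply Kolmogorov to $B$ directly, since Theorem \ref{extendtime} gives independent increments with $H$-second moment $|u-v|\,(1+|\gamma|)^{-2}|\gamma|^{-1}\cdot 2|\gamma|\sum\lambda_n^2$.

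The main obstacle I expect is bookkeeping rather than substance. One has to check that It\^o's hypotheses (his Theorem on continuous regularity in Section 10 of \cite{ito}) are met with his $r$ built from $p$ and $\Gamma$. It must also be confirmed that the same $H$ serves $B$: the characteristic seminorm $b$ of $B$ is a constant multiple of $p$, so $b$ is $r$-continuous. Finally, the two-sided time with independent halves needs care at $t=0$.
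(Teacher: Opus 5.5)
Your proposal is correct and takes essentially the paper's route: the paper's proof is a one-line appeal to It\^o's continuous regularity theorem (\cite{ito}, Theorem 10.7) using the Ornstein--Uhlenbeck structure from Theorem \ref{statugrfthm}, and your hypothesis check plus the Gaussian fourth-moment Kolmogorov--\v{C}entsov estimate just make that conclusion explicit. You also spell out the case of $B$ (by inverse Lamperti or directly from its independent increments), which the paper leaves implicit, and the only slip is a harmless constant: $r(d_n)=1$ gives $\langle d_n,(-A)^{-1}d_n\rangle_\pi=|\gamma|(1+|\gamma|)^{-2}$, hence $c(e_n)=2|\gamma|\lambda_n^2(1+|\gamma|)^{-2}$, so the factor $|\gamma|^{-1}$ in your second-moment constant for $B$ should be dropped.
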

\begin{proof}
On appeal to (\cite{ito}, Theorem 10.7), this follows from Theorem \ref{statugrfthm}.
\end{proof}
In particular 
 one has the following result
  in somewhat striking analogy with \cite{albeveriokus};
in the statement  the notation
 $\mathcal{N}(m,\Sigma)$ generically denotes
a Gaussian measure on a Hilbert space, the
dual space $H$ to ${D}_K$
in this case,  having
 mean vector $m$
and covariance operator $\Sigma$.
\begin{cor}\label{riggedZcor}
$Z$ is a sample continuous $H$-process which is
centered, Gaussian, Markov and stationary 
having invariant measure $\mathcal{N}(0,\frac{1}{2}\Gamma^{-1}C)$
and with transition probability 
\begin{equation}
p(t;f,\cdot) = \mathcal{N}(e^{-|\gamma| t}f,
(1-e^{-2|\gamma|t})\frac{1}{2}\Gamma^{-1}C)(\cdot),\quad
f\in D_K,
\end{equation}
where $\Gamma =-|\gamma|{\mathbf I}$, and
$C= 2(-A)^{-1}$. 
\end{cor}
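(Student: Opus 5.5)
The plan is to combine the continuous regular version of $Z$ from Corollary \ref{contregcor} with the identification of $Z$ as It\^o's OU$\mathcal{L}(1_\pi^\perp)$-process in Theorem \ref{itolimit}, and then read off the finite dimensional Gaussian structure from the Lamperti representation in Theorem \ref{statugrfthm}.

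First, by Corollary \ref{contregcor} (It\^o's Theorem 10.7), $Z$ has a continuous regular version. This is an $H$-valued process $\tilde Z$ with $t\mapsto\tilde Z(t)$ norm-continuous in $H$, the dual of $(D_K,r_K)$, and $\langle \tilde Z(t)|f\rangle = Z(f,t)$ a.s. for each $f\in D_K$, $t\in\mathbb{R}$. Centered Gaussianity passes to versions, as remarked after Definition \ref{contregdef}. The Markov property and stationarity are properties of the finite dimensional distributions of $\{\langle\tilde Z(t)|f\rangle\}$, and these coincide with those of $Z$. By Theorem \ref{itolimit}(1), $Z$ is centered, Gaussian, Markov and stationary. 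Since $H$ is separable and $\{e_n\}$ is a basis of $D_K$, the cylinder $\sigma$-field on $H$ generated by the functionals $y\mapsto\langle y|e_n\rangle$ is the Borel $\sigma$-field. Hence these properties lift to $\tilde Z$ as an $H$-valued process.

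Next I identify the one-time marginal. For $f,g\in D_K$, Theorem \ref{statugrfthm} gives
\[
\mathbb{E}Z(f,t)Z(g,t)=\frac{e^{-2|\gamma|t}}{2|\gamma|}\,e^{2|\gamma|t}\langle f,Cg\rangle_\pi=\langle f,\tfrac{1}{2}(-\Gamma)^{-1}Cg\rangle_\pi,
\]
which is $p(f,g)$ in (\ref{pfgform}). So the law of $\tilde Z(t)$ on $H$ is the centered Gaussian measure whose covariance form restricted to $D_K$ is $\frac12\Gamma^{-1}C$, up to the sign convention $\Gamma^{-1}C\equiv(-\Gamma)^{-1}C$ with $C=2(-A)^{-1}$. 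That this defines a genuine Radon Gaussian measure on $H$ follows from Theorem \ref{regthm}. The only real work is here: check that the covariance, viewed on $H$ through the rigging, is trace class. Since $\{e_n\}$ is $r_K$-orthonormal and $e_n=\lambda_n d_n$ with $d_n$ orthonormal in $r$, we get
\[
\mathbb{E}\|\tilde Z(t)\|^2=\sum_n \mathbb{E}\langle\tilde Z(t)|e_n\rangle^2=\sum_n p(e_n)^2=(1+|\gamma|)^{-2}\sum_n\lambda_n^2<\infty.
\]
I expect this step to be the main obstacle only in keeping the dualities straight.

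Finally, for the transition probability, I use Gaussian conditioning together with the Markov property. For $s<t$ and $f\in D_K$, the computation in the proof of Theorem \ref{itolimit}(1) gives
\[
\mathbb{E}[Z(f,t)\mid\mathcal{F}_s]=Z(e^{-|\gamma|(t-s)}f,s).
\]
The conditional variance is therefore
\[
p^2(f)-e^{-2|\gamma|(t-s)}p^2(f)=(1-e^{-2|\gamma|(t-s)})\,p^2(f),
\]
and it is deterministic by Gaussianity. Using the isometric correspondence between $D_K$ and $H$ to identify the starting point with $f$ yields the kernel $\mathcal{N}\big(e^{-|\gamma|t}f,(1-e^{-2|\gamma|t})\frac12\Gamma^{-1}C\big)$, which is consistent with It\^o's general transition formula for OU processes. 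Stationarity makes this independent of $s$. As a check, the invariant law is recovered as $t\to\infty$.
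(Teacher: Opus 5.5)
Your proposal is correct, but it takes a more hands-on route than the paper. The paper simply invokes It\^o's Theorem 10.8, which already yields, for any OU$\mathcal{L}(D)$-process with a continuous regular version, that this version is a centered, Gaussian, Markov, stationary $H$-process. Its invariant law is $\mathcal{N}(0,p)$, and its transition law is determined by the semigroup and $v_t(f,g)=p(f,g)-p(e^{t\Gamma}f,e^{t\Gamma}g)$. The paper's only work is to compute $p(f,g)=\langle f,(-A)^{-1}g\rangle_\pi/|\gamma|$ and $v_t=(1-e^{-2|\gamma|t})p$.

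You use It\^o only through Corollary~\ref{contregcor} (Theorem 10.7), to get sample continuity, and then re-derive the content of Theorem 10.8 in this special case:
\begin{itemize}
\item you lift the finite-dimensional properties to the $H$-valued version via Borel $=$ cylinder $\sigma$-field;
\item you check the trace condition $\mathbb{E}\|\tilde Z(t)\|^2=\sum_n p(e_n)^2=(1+|\gamma|)^{-2}\sum_n\lambda_n^2$, which is exactly the Hilbert--Schmidt mechanism behind Theorem 10.7;
\item you obtain the kernel from the conditional mean $e^{-|\gamma|(t-s)}Z(f,s)$ and the deterministic Gaussian conditional variance $(1-e^{-2|\gamma|(t-s)})p^2(f)$, which is It\^o's $v_t$.
\end{itemize}
The paper's route is shorter but rests entirely on $Z$ fitting It\^o's framework as claimed in Theorem~\ref{itolimit}. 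Yours is self-contained and uses $\Gamma=-|\gamma|\mathbf{I}$ and the Lamperti representation directly. It also correctly flags the sign slip in the statement: the invariant covariance is $-\frac12\Gamma^{-1}C=\frac12(-\Gamma)^{-1}C$, consistent with $Q=-\frac12\Gamma^{-1}C$ in the introduction.

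Two minor points.

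First, derive the conditional-mean identity directly rather than citing the proof of Theorem~\ref{itolimit}(1). That computation is garbled and ends with the wrong-sign semigroup $e^{-\Gamma t}$. Here $\mathcal{F}_s=\sigma(B(h,v):0<v\le e^{2|\gamma|s})$, so independent increments of $B$ give $\mathbb{E}[B(f,e^{2|\gamma|t})\mid\mathcal{F}_s]=B(f,e^{2|\gamma|s})$. Applying this to linear combinations of test functions gives the joint finite-dimensional conditional laws you need to assemble the $H$-valued kernel.

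Second, the kernel is naturally indexed by starting points $x\in H$, with mean $e^{-|\gamma|t}x$. So no ``isometric correspondence'' between $D_K$ and $H$ is needed, and the Riesz isometry $D_K\cong H$ is not the Gelfand embedding $D_K\subset D\subset H$ anyway. Since the mean map is scalar multiplication, any linear identification gives the stated formula.
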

\begin{proof}
This follows directly from Theorem \ref{statugrfthm} and
 (\cite{ito},Theorem 10.8) since It\^o's covariance functional
 $v(f,g) =  \{\frac{2\langle f,(-A)^{-1}g\rangle_\pi}{2|\gamma|}\}_{f,g\in D_K}$ defines the operator ${1\over 2}\Gamma^{-1}C$. Specifically, one has from (\ref{pfgform}) 
that
 \begin{equation}
 p(f,g) = 
 \frac{\langle f,(-A)^{-1}g\rangle_\pi}{|\gamma|}
=\langle f,\frac{1}{2}\Gamma^{-1}Cg\rangle_\pi,
\quad f,g\in D_K
  \end{equation}
  for the invariant distribution, 
  and in the notation of \cite{ito},
  \begin{eqnarray}
  v(f,g)&=&p(f,g)-p(e^{\Gamma t}f,e^{\Gamma t}g)\nonumber\\
  &=& (1-e^{-2|\gamma|t})p(f,g)\nonumber\\
&=& (1-e^{-2|\gamma|t})\langle f,  \frac{1}{2}\Gamma^{-1}Cg\rangle_\pi \quad f,g\in D_K,
  \end{eqnarray}
  for the transition probability. 
\end{proof}
\section{UGRFs Associated With Time-Reversible Markov Processes}\label{mainsec}
This final section establishes connections between UGRFs
and their {\it underlying} Markov process  with
certain GRFs
and their {\it associated} Markov processes via their (i) Greens function or
(ii) Dirichlet form, respectively. 
The first theorem identifies a  class of 
GRFs associated with killed
ergodic Markov processes in the sense of Dynkin 
in terms of linearly transformed Stationary UGRFs and
$OU\mathcal{L}(1_\pi^\perp)$-processes in the sense of It\^o \cite{ito}. 
Moreover, this class includes Nelson’s massive free boson
example on a torus as represented by Albeverio and Kusuoka 
in \cite{albeveriokus}.  The restrictions on $A$ for the following
theorem are much weaker than those of Theorem \ref{itolimit}
and Theorem \ref{dirformthm} below. 
\begin{thm}\label{dynkinugrf} 
Consider the Greens operator $C=-(A-m^2)^{-1}$ 
where $m^2\ge 0$ and $A$ is  a densely defined self-adjoint infinitesimal
generator  on $L^2(S,\pi)$ of a strongly continuous semigroup
for an ergodic invariant probability $\pi$
such
that $ A^{-1}$ is a bounded linear operator on the range
$\mathcal{R}(A)=1_\pi^\perp$.
 Define a bounded linear transformation $T:1_\pi^\perp\to 1_\pi^\perp$ 
by
\begin{equation}
Tf = 
(-A)^{\frac{1}{2}}(A-m^2)^{-\frac{1}{2}}f, \quad f\in 1_\pi^\perp.
\end{equation}
Let $Z$ denote the Stationary UGRF obtained
by a Lamperti transformation with dissipative 
operator $\Gamma={\mathbf I}$ 
and with covariance operator $C = -2A^{-1}$.
Then the linearly transformed Stationary UGRF
 $Z$ defined by $TZ$
 is It\^o's $OU\mathcal{L}(1_\pi^\perp)$-process  with 
parameters
$C=-(A-m^2)^{-1}$ and
and characteristic operator $\Gamma = {\bf I}$. 
In particular,  $Q=C$.
\end{thm}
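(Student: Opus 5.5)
The plan is to read $TZ$ as the random field $(TZ)(f,t):=Z(Tf,t)$, $f\in1_\pi^\perp$, $t\in\mathbb{R}$. Since $T$ is self-adjoint, this is the same as applying $T$ to the regular version of Corollary \ref{contregcor}. We then check It\^o's defining properties (\cite{ito}, Definition 2.3) for $TZ$, compute its characteristics, and conclude by the uniqueness theorem (\cite{ito}, Theorem 9.1), exactly as in part (1) of Theorem \ref{itolimit}.

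\emph{Step 1: $T$ is well defined and bounded.} The expression $(A-m^2)^{-\frac12}$ is to be read as $(m^2-A)^{-\frac12}$, since $m^2-A\ge 0$. Under the hypotheses (which are {\bf RFA1}), the spectral theorem for the self-adjoint operator $-A\ge 0$ restricted to $1_\pi^\perp$ gives
\begin{equation*}
T=\phi(-A),\qquad \phi(\lambda)=\sqrt{\lambda/(\lambda+m^2)}.
\end{equation*}
Because $0\le\phi\le 1$ on the spectrum, $T$ is a bounded self-adjoint contraction. It commutes with $A^{-1}$ and with $(m^2-A)^{-1}$, and it leaves $1_\pi^\perp$ invariant; when $m=0$ it is simply the identity. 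Since $f\mapsto Z(f,t)$ is linear and mean-continuous, the field $(TZ)(f,t)$ is again a centered Gaussian $\mathcal{L}(1_\pi^\perp)$-process that is linear and continuous in $f$.

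\emph{Step 2: covariance and characteristics.} Take the Lamperti parameter with $|\gamma|=1$, so that the dissipative operator has unit rate. By Theorem \ref{statugrfthm} and (\ref{pfgform}),
\begin{equation*}
\mathbb{E}\,Z(f,t)Z(g,s)=e^{-|t-s|}\langle f,(-A)^{-1}g\rangle_\pi .
\end{equation*}
Substituting $Tf,Tg$ and using commutativity of the functional calculus gives
\begin{equation*}
\langle Tf,(-A)^{-1}Tg\rangle_\pi=\langle f,\phi^2(-A)(-A)^{-1}g\rangle_\pi=\langle f,(m^2-A)^{-1}g\rangle_\pi .
\end{equation*}
Hence $TZ$ is stationary with equal-time covariance $p_T(f,g)=\langle f,(m^2-A)^{-1}g\rangle_\pi=\langle f,Cg\rangle_\pi$ and temporal factor $e^{-|t-s|}$. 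This is the identification $Q=C$.

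\emph{Step 3: Markov property and the semigroup.} The Markov property follows as in Theorem \ref{itolimit}: $\mathbb{E}[Z(Tf,t)\mid \text{past}]=Z(e^{-t}Tf,0)=(TZ)(e^{-t}f,0)$. This shows that the characteristic semigroup is $e^{-t}\mathbf I$, with unit dissipation rate (the statement's $\Gamma=\mathbf I$). We then invoke (\cite{ito}, Theorem 9.1) to identify $TZ$ uniquely, up to finite-dimensional distributions. The deterministic component vanishes by the same limit computation as in part (2) of Theorem \ref{itolimit}.

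The main obstacle I expect is in Step 3. The filtration generated by $TZ$ is a priori smaller than that of $Z$, so the Markov property must be checked relative to $TZ$'s own past. I would handle this through Gaussianity: the conditional expectation given the larger past is already a function of $(TZ)(\cdot,0)$, so conditioning down to the smaller $\sigma$-field preserves the formula. I would also use that for $m>0$ the map $T$ is injective with dense range, so the two filtrations in fact agree up to null sets. A secondary task is reconciling sign and normalization conventions, so that It\^o's fluctuation-dissipation relation indeed yields $Q=C$ with unit dissipation.
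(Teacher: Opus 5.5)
Your proposal is correct and follows the paper's route. Both arguments rest on the functional-calculus identity $\langle Tf,(-A)^{-1}Tg\rangle_\pi=\langle f,(m^2-A)^{-1}g\rangle_\pi$ for the field $(TZ)(f,t)=Z(Tf,t)$. Your added checks fill in what the paper compresses into ``it suffices to compute $C$'': the Markov property in $TZ$'s own filtration via the tower property, the semigroup $e^{-t}\mathbf I$, the vanishing deterministic part, and It\^o's uniqueness theorem. Your choice $|\gamma|=1$ and your reading of $(A-m^2)^{-\frac12}$ as $(m^2-A)^{-\frac12}$ also tidy up the factor-$2$ and sign bookkeeping in the paper's display.

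Two small remarks. First, the aside identifying $TZ$ with $T$ applied to the regular version of Corollary \ref{contregcor} is unnecessary and not justified as stated, since that version lives in the dual of $D_K$, which $T$ need not preserve. Second, the closing ``secondary task'' needs no further work: $Q=C$ holds exactly in your Step 2 sense, with $C$ the Green's operator. By contrast, $Q=-\frac12\Gamma^{-1}C$ makes the noise coefficient at unit rate equal to $2(m^2-A)^{-1}$, so no convention would make that coefficient equal $Q$.
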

\begin{proof}
First note that using the spectral theory for 
self-adjoint operators on a Hilbert space, e.g., \cite{riesz},
$T$ is a bounded linear operator for any $m^2\ge 0$. Moreover $T:1_\pi^\perp\to1_\pi^\perp$ since 
\begin{equation}0 = \langle A1,1\rangle_\pi = \langle (-A)^{\frac{1}{2}}1,(-A)^{\frac{1}{2}}1\rangle_\pi =||(-A)^{\frac{1}{2}}1||_\pi^2\end{equation}
implies that $(-A)^{\frac{1}{2}}1=0$. Thus, for $f\in1_\pi^\perp$,
$Tf\in_\pi^\perp$ follows from the definition of $T$ and
self-adjointness of $A^{\frac{1}{2}}$.
It suffices to compute $C$. Noting stationarity,
one has for arbitrary $t\in\mathbb{R}$,
\begin{eqnarray}
\mathbb{E}Z(Tf,t)Z(Tg,t) &=& 2\langle Tf,(-A)^{-1}Tg\rangle_\pi\nonumber\\
&=& 2\langle (-A)^{\frac{1}{2}}(A-m^2)^{-\frac{1}{2}}f,(-A)^{-1}(-A)^{\frac{1}{2}}(A-m^2)^{-\frac{1}{2}}g\rangle_\pi\nonumber \\
&=& 2\langle (A-m^2)^{-\frac{1}{2}}f,(-A)^{\frac{1}{2}}(-A)^{-\frac{1}{2}}
(A-m^2)^{-\frac{1}{2}}g\rangle_\pi \nonumber\\
&=& 2\langle f,-(A-m^2)^{-1}g\rangle_\pi, \quad f,g\in1_\pi^\perp.
\end{eqnarray}
Thus the covariance operator of $TZ$ is given by
the Greens function $ -(A-m^2)^{-1} = \frac{1}{2}{\bf I}^{-1}(-2)(A-m^2)^{-1}
$.
\end{proof}

For the second connection consider the temporally frozen Brownian UGRF $B=\{B(f,1):f\in1_\pi^\perp\}$.
We wish to formally note that under rather restrictive conditions on $A$, the
UGRF $B$ is associated with a Markov
process $Y$ via a specification of its Dirichlet form. This is a Diaconis-Evans \cite{DiaconisEvans}
 association with the UGRF for the generator $2A^{-1}$ 
of the UGRF's underlying Markov process  for finite state spaces
 as discussed in the introduction. 

 \begin{thm}\label{dirformthm} Assume that $(A,\mathcal{D}(A))$ satisfies ${\mathbf RFA1}$ and, moreover,
 that $A^{-1}$  is an
  infinitesimal generator on ${\mathcal D}(A^{-1})\subset 1_\pi^\perp$ of
 a Markov process on $S$.  Then the Gaussian random field 
 $\{B(f,1):f\in1_\pi^\perp\}$ is associated  
  with a 
 Markov process $Y$ on $S$
 via the specification
 of its covariance by the Dirichlet form of $Y$.  \end{thm}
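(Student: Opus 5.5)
The plan is to compare two covariances directly: the covariance of the frozen Brownian UGRF given by Corollary \ref{kcylcor}, and the covariance a Diaconis--Evans type field assigns through the Dirichlet form of the generator of $Y$. Taking $t=1$ in Theorem \ref{spacetimecov} and Corollary \ref{kcylcor} (equivalently Theorem \ref{extendtime}), under \textbf{RFA1} we have
\begin{equation*}
\mathrm{cov}(B(f,1),B(h,1)) = -2\langle f, A^{-1}h\rangle_\pi, \qquad f,h\in 1_\pi^\perp .
\end{equation*}
So the proof reduces to showing that the right side is (up to the normalization built into the statement) the Dirichlet form of a Markov process $Y$ on $S$.

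First, define $Y$ as the Markov process whose generator is $L:=2A^{-1}$ on $\mathcal{D}(A^{-1})\subset 1_\pi^\perp$. By hypothesis $A^{-1}$ generates a Markov process, and rescaling time by the factor $2$ keeps this true. Because $A$ is self-adjoint, the generalized inverse $A^{-1}$ is self-adjoint (see the remark preceding Corollary \ref{equiv}), and it is bounded by \textbf{RFA1}. Since $A$ is dissipative, $\langle f,-A^{-1}f\rangle_\pi=\langle Ag,-g\rangle_\pi\ge 0$ where $Ag=f$, so $-L$ is nonnegative. Consequently the symmetric form
\begin{equation*}
\mathcal{E}_Y(f,h) := -\langle f, Lh\rangle_\pi = -2\langle f,A^{-1}h\rangle_\pi
\end{equation*}
is a closed, nonnegative symmetric bilinear form on $1_\pi^\perp$. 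Boundedness of $A^{-1}$ makes the form domain all of $1_\pi^\perp$, and closedness comes for free.

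Second, verify that $\mathcal{E}_Y$ is a Dirichlet form of $Y$. Here the hypothesis that $A^{-1}$ generates a Markov (sub-Markovian, positivity-preserving) semigroup is exactly what is needed: for a symmetric strongly continuous semigroup, the Markov property is equivalent to the Beurling--Deny contraction property of the associated form. The semigroup $e^{tL}$ is self-adjoint on $L^2(S,\pi)$, extended by the identity on constants, and its invariant measure is $\pi$, since $L$ vanishes on constants and is self-adjoint. So $\pi$-symmetry holds.

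Finally, combine the two steps. Since $\mathrm{cov}(B(f,1),B(h,1))=\mathcal{E}_Y(f,h)$ for all $f,h\in 1_\pi^\perp$ and a centered Gaussian field is determined by its covariance, $\{B(f,1)\}$ is precisely the Gaussian field associated with $Y$ in the Diaconis--Evans sense. That is the sense in which the covariance is specified by $-A$ for finite chains, with $-A$ here replaced by $-L$.

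The main obstacle is the second step. It requires reconciling the abstract hypothesis that ``$A^{-1}$ is a generator of a Markov process'' with the Markov property of the form, and dealing with the fact that $A^{-1}$ acts only on $1_\pi^\perp$ rather than on all of $L^2(S,\pi)$. I would first handle this by extending $L$ by zero on constants, which is consistent with the convention that $A^{-1}$ vanishes on $\mathcal{R}^\perp(A)$. I would then invoke the Fukushima/Beurling--Deny correspondence between symmetric Markov semigroups and Dirichlet forms. The form can be checked against the two-point Example \ref{dirformeg} as a sanity check.
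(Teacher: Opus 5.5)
Your proposal is correct and follows the paper's proof. The paper likewise takes $Y$ to be the Markov process generated by $2A^{-1}$ and observes that its Dirichlet form $-2\langle f,A^{-1}g\rangle_\pi$ coincides with the covariance of $\{B(f,1):f\in 1_\pi^\perp\}$ from Corollary~\ref{kcylcor}; your extra checks (nonnegativity from dissipativity, closedness from boundedness, extension by zero on constants, and the Beurling--Deny correspondence) only fill in details the paper leaves implicit.
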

 \begin{proof}
 
Let 
$Y=\{Y(t): t\ge 0\}$ denote the Markov process 
 with infinitesimal generator $2A^{-1}$ on $1_\pi^\perp$, and independent of $B$. 
 Then,
since the Dirichlet form $-2\langle f,A^{-1}g\rangle_\pi, f,g\in1_\pi^\perp,$ of $Y$
is the covariance of the Gaussian random field $\{B(f,1): f\in1_\pi^\perp\}$,
it follows that the UGRF $B$ is the GRF associated with the Markov 
process $Y$  via this
specification.  

\end{proof}

\begin{eg}\label{dirformeg}
Let $S=\{1,2\}$ and suppose that $X$ is the ergodic reversible 
continuous time Markov process
on $S$ with infinitesimal generator $A= \begin{bmatrix}-1 & 1\\
                                                          1 & -1\end{bmatrix}$.  
                                                          Then $\pi_1=\pi_2=\frac{1}{2}$ and $1_\pi^\perp=\{(f_1,f_2)\in\mathbb{R}^2: f_1=-f_2\}$.
                                                          Since $Af=-2f$ when restricted to $f\in1_\pi^\perp\subset L^2(S,\pi)$, 
$A^{-1}f=-\frac{1}{2}f = \frac{1}{4}Af, f\in1_\pi^\perp$, i.e., in matrix form
 $A^{-1}=   \begin{bmatrix}-\frac{1}{4} & \frac{1}{4}\\
                              \frac{1}{4} & -\frac{1}{4}\end{bmatrix}$.                                                     
  For the covariance one has $\mathbb{E}B(f,1)B(g,1) = -2\langle f,A^{-1}g\rangle_\pi
  = f_1g_1, f,g\in H=1_\pi^\perp$.
Let $Y$ be the Markov process on $S$ with infinitesimal generator
$2A^{-1}$ on $1_\pi^\perp\subset L^2(S,{\mathcal S},\pi)$. 
Then $Y$  is defined by  the 
 symmetric Dirichlet form $\mathcal{E}(f,g) =-2\langle f,A^{-1}g\rangle_\pi, f,g\in1_\pi^\perp$,
 that furnishes the covariance of the UGRF  
 $B(f,1)$. In particular, $Y$ is a  `speeded up' version of the jump process
 $X$ underlying the UGRF.                                 
\end{eg}

\begin{remark} As remarked in \cite{DiaconisEvans}, a Dynkin type isomorphism
theorem in terms of local times \cite{dynkiniso} is unclear for specifications of a GRF covariance by  
Dirichlet forms. 
Perhaps an alternative class of additive functionals of $Y$
could serve ? 
 For a multinomial $p(x_1,\dots,x_m)$
 and $f_1,\dots,f_m\in1_\pi^\perp$, and $f,g\in1_\pi^\perp$,
$\mathbb{E}(B(f,1)B(g,1)p(B(f_j,1),j=1,\dots,m)$ is computable 
 as a  function of the inner products
 weighted by the combinatorics of pairings of indices, e.g.,
\begin{equation}{\mathbb E}(B(f,1)B(g,1)\frac{1}{2}B^2(h,1))
= 2\langle f,A^{-1}g\rangle_\pi\langle h,A^{-1}h\rangle_\pi
+ 4\langle f,A^{-1}h\rangle_\pi\langle g,h\rangle_\pi.\end{equation}
However an expression in terms of functions of $B^2(h,1)$ and 
some independent
non-negative additive functionals $N(f)$, say, of the associated Markov process $Y$,
remains illusive if at all possible.
\end{remark}
\section{Acknowledgments} 
 The author wishes to thank Mat Titus, Bill Faris and 
Greg Lawler for corrections, comments and insights that greatly improved previous
versions of this note, and to Rabi Bhattacharya for suggesting the topic, and to Chuck Newman for some
encouraging comments during the formative stages.
The author assumes sole ownership of any remaining errors.

\bibliography{ugrf.bib}

\end{document}